\newcommand\mydots{\hbox to 1em{.\hss.\hss.}}
\newcommand{\bigslant}[2]{{\raisebox{.2em}{$#1$}\left/\raisebox{-.2em}{$#2$}\right.}}
\theoremstyle{plain}
      \newtheorem{theorem}{Theorem}[section]
      \newtheorem{lemma}[theorem]{Lemma}
      \newtheorem{corollary}[theorem]{Corollary}
      \theoremstyle{definition}
      \newtheorem{definition}[theorem]{Definition}
      \theoremstyle{remark}
      \newtheorem{remark}[theorem]{Remark}
      \theoremstyle{proposition}
      \newtheorem{proposition}[theorem]{Proposition}
      \theoremstyle{definition}
\newtheorem{example}{Example}[section]
      \newcommand\blfootnote[1]{%
  \begingroup
  \renewcommand\thefootnote{}\footnote{#1}%
  \addtocounter{footnote}{-1}%
  \endgroup
}
      \newcommand{\C}{{\mathbb{C}}}
   		\newcommand{\opn}{\operatorname}
      \def\@setcopyright{}
      \def\serieslogo@{}
      \title{On the rationality of the singularities of the \textit{A}\textsubscript{2}-loci}
      \author{Natalia Kolokolnikova}
       \address{University of Geneva, Switzerland}
   \email{Natalia.Kolokolnikova@unige.ch}
\begin{document}
\maketitle
\section{Introduction}
\blfootnote{The research was supported by Grant 156645 of the Swiss Science Foundation} In the last twenty years, a significant progress has been made in calculating Thom polynomials of contact singularities, see \cite{Rim}, \cite{BSz}, \cite{FRim}, and \cite{Kaz}. We note that the residue formulas for $A_n$-singularities obtained in \cite{BSz} are reminiscent of the Jeffrey-Kirwan residue for reductive quotients \cite{JKir}. According to Boutot \cite{Bou}, if a variety has rational singularities, then so does its quotient by the action of a reductive group. Thus, the natural question is whether the  $A_n$-loci can be presented as a reductive quotient and, in particular, if they have rational singularities. The same question appears in the recent work of Rim\'anyi and Szenes \cite{RSz} on the $K$-theoretic invariants of the same loci. In this paper we show that, in general, the $A_2$-loci have singularities worse than rational, and therefore they can not be presented as a GIT quotient of a smooth variety with respect to a reductive group. \par
We begin with recalling some facts about smooth resolutions and a brief introduction to singularity theory.\par
Let $X$ be an affine variety. If $Y$ is smooth and there exists a proper birational map $f\colon Y \rightarrow X,$ then we say that $Y$ is a \textit{smooth resolution} of $X.$
\begin{proposition}
The cohomology groups $H^i(Y,\mathcal{O}_Y)$ do not depend on the smooth resolution $Y,$ i.e. are invariants of $X$.\end{proposition}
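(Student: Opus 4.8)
The plan is to reduce to a comparison of two resolutions, one of which dominates the other, and then to apply a vanishing theorem for higher direct images. Suppose $f_1\colon Y_1\to X$ and $f_2\colon Y_2\to X$ are two smooth resolutions. Inside the fibre product $Y_1\times_X Y_2$ take the closure $W$ of the graph of the birational map $f_2^{-1}\circ f_1\colon Y_1\dashrightarrow Y_2$, and let $h\colon Z\to W$ be a resolution of singularities (Hironaka; we work over $\C$). Composing $h$ with the two projections yields proper birational morphisms $g_1\colon Z\to Y_1$ and $g_2\colon Z\to Y_2$ with $f_1\circ g_1=f_2\circ g_2$. It therefore suffices to prove the following: if $g\colon Z\to Y$ is a proper birational morphism of smooth varieties, then $g$ induces isomorphisms $H^i(Y,\mathcal{O}_Y)\xrightarrow{\ \sim\ }H^i(Z,\mathcal{O}_Z)$ for all $i$. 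Applying this to $g_1$ and $g_2$ gives $H^i(Y_1,\mathcal{O}_{Y_1})\cong H^i(Z,\mathcal{O}_Z)\cong H^i(Y_2,\mathcal{O}_{Y_2})$, which is the assertion.

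The heart of the matter is the identity $Rg_*\mathcal{O}_Z=\mathcal{O}_Y$ in the derived category, that is, $g_*\mathcal{O}_Z=\mathcal{O}_Y$ together with $R^qg_*\mathcal{O}_Z=0$ for $q>0$. The degree-zero statement is standard: $g_*\mathcal{O}_Z$ is a coherent $\mathcal{O}_Y$-algebra, hence integral over $\mathcal{O}_Y$, and it lies in the function field of $Y$ since $g$ is birational, so it equals $\mathcal{O}_Y$ by normality of $Y$ (Zariski's main theorem). For the vanishing of the higher direct images I would use the Grauert--Riemenschneider theorem: it gives $R^qg_*\omega_Z=0$ for $q>0$, while $g_*\omega_Z=\omega_Y$ because the relative canonical divisor of $g$ is effective and exceptional; then Grothendieck duality yields $R\mathcal{H}om_{\mathcal{O}_Y}(Rg_*\mathcal{O}_Z,\omega_Y^{\bullet})\cong Rg_*\omega_Z^{\bullet}\cong\omega_Y^{\bullet}$, and dualizing once more (the dualizing complex $\omega_Y^{\bullet}$ is invertible up to shift since $Y$ is smooth) gives $Rg_*\mathcal{O}_Z\cong\mathcal{O}_Y$. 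Alternatively one can invoke the weak factorization theorem to reduce to the case where $g$ is the blow-up of a smooth subvariety, where $R^qg_*\mathcal{O}_Z=0$ follows from the theorem on formal functions together with the vanishing of $H^{>0}$ of the structure sheaf of the projective-space fibres.

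With $Rg_*\mathcal{O}_Z=\mathcal{O}_Y$ in hand, the Leray spectral sequence $H^p(Y,R^qg_*\mathcal{O}_Z)\Rightarrow H^{p+q}(Z,\mathcal{O}_Z)$ collapses to the edge isomorphisms $H^i(Y,\mathcal{O}_Y)\xrightarrow{\ \sim\ }H^i(Z,\mathcal{O}_Z)$, completing the reduction above. I expect the main obstacle to be precisely the higher direct image vanishing $R^qg_*\mathcal{O}_Z=0$ for $q>0$; the construction of the common dominating resolution, the reduction to a pair related by a single morphism, and the spectral-sequence step are all formal, and the degree-zero identification is elementary.
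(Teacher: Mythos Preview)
Your argument is correct. The paper, however, does not give a proof at all: it simply records that the statement ``follows from the Elkik--Fujita Vanishing Theorem'' and cites Kawamata--Matsuda--Matsuki. So your write-up is not so much a different approach as an actual proof where the paper has a one-line reference.

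That said, the underlying mechanism is the same in both cases: one must know that a proper birational morphism $g\colon Z\to Y$ between smooth varieties satisfies $Rg_*\mathcal{O}_Z=\mathcal{O}_Y$, and then the common-roof construction plus Leray finish the job. The paper outsources this vanishing to the Elkik--Fujita theorem as stated in the MMP survey; you obtain it from Grauert--Riemenschneider together with Grothendieck duality (or, alternatively, from weak factorization and a direct computation on blow-ups). Your route is more self-contained and avoids importing the MMP machinery, at the cost of invoking duality; the paper's citation is shorter but pushes all the content into the reference. Either way the key input is the higher-direct-image vanishing, exactly the step you flagged as the main obstacle.
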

This fact follows from the Elkik-Fujita Vanishing Theorem \cite{Kaw}.\par
\begin{proposition} \label{pr}
$H^0(X,\mathcal{O}_X)=H^0(Y,\mathcal{O}_Y)$ if and only if $X$ is normal. 	
\end{proposition}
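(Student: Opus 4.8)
The plan is to compute both sides explicitly and reduce the statement to the definition of normality. Write $A=H^0(X,\mathcal{O}_X)$ for the coordinate ring of the affine variety $X$, and $B=H^0(Y,\mathcal{O}_Y)$; directly from the definition of the direct image, $B=H^0(X,f_*\mathcal{O}_Y)$, and the ``equality'' asserted in the proposition is exactly the statement that the comorphism $f^{\#}\colon A\to B$ is an isomorphism. Since $X$ is a variety (reduced and irreducible) and $f$ is birational, hence dominant, $f^{\#}$ is injective, so $A$ and $B$ may be regarded as subrings of the common function field $K=K(X)=K(Y)$ with $A\subseteq B$. I will show that $B$ coincides with the normalization $\widetilde A$ of $A$ in $K$; the proposition then follows at once, since $X$ is normal precisely when $A=\widetilde A$.

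For the inclusion $\widetilde A\subseteq B$: the resolution $Y$ is smooth, hence normal, and the ring of global functions of a normal integral scheme is integrally closed in its function field --- if $g\in K$ is integral over $B$ it is integral over each local ring $\mathcal{O}_{Y,y}$, which is integrally closed, so $g\in\mathcal{O}_{Y,y}$ for every $y$ and therefore $g\in\bigcap_{y}\mathcal{O}_{Y,y}=B$. Since $A\subseteq B$ and every element of $\widetilde A$ is integral over $A$, we get $\widetilde A\subseteq B$. (Alternatively, this inclusion is just the universal property of the normalization $\widetilde X=\operatorname{Spec}\widetilde A$ applied to the dominant morphism $f$ out of the normal variety $Y$.)

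For the reverse inclusion $B\subseteq\widetilde A$ --- the step where properness is indispensable, and the heart of the argument --- we use that $f$ is proper, so $f_*\mathcal{O}_Y$ is a coherent sheaf of $\mathcal{O}_X$-algebras; as $X$ is affine, $B=H^0(X,f_*\mathcal{O}_Y)$ is therefore a finitely generated $A$-module. A subring of $K$ that is module-finite over the Noetherian domain $A$ consists of elements integral over $A$, so $B\subseteq\widetilde A$. Together with the previous paragraph this yields $B=\widetilde A$, completing the proof. (That properness cannot be dropped is already visible from $Y=\mathbb{A}^1\setminus\{0\}\to X=\mathbb{A}^1$, where $B=k[t,t^{-1}]\neq k[t]=A=\widetilde A$.)

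The only genuine obstacle is the finiteness invoked in the last paragraph, i.e. the coherence of $f_*\mathcal{O}_Y$ for the proper morphism $f$; this may be quoted from Grothendieck's coherence theorem, or, in the present birational setting, extracted from Zariski's Main Theorem (which in fact gives directly that $f_*\mathcal{O}_Y=\mathcal{O}_X$ once $X$ is known to be normal, hence $f_*\mathcal{O}_Y=\mathcal{O}_{\widetilde X}$ in general). One should also record at the outset the standing assumption that we work with finitely generated algebras over a field, which is what guarantees that $\widetilde A$ is finite over $A$ in the first place. It is worth noting that smoothness of $Y$ is not essential here --- normality of $Y$ already suffices --- it is the properness and birationality of $f$ that carry the argument.
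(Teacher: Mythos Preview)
Your proof is correct and follows essentially the same path the paper indicates: the paper states only that the argument rests on the universal property of the normalization and Zariski's Main Theorem, and you have fleshed out precisely these two ingredients --- the universal property gives $\widetilde{A}\subseteq B$, while properness (via coherence of $f_*\mathcal{O}_Y$, which is the content extracted from Zariski's Main Theorem here) gives $B\subseteq\widetilde{A}$, whence $B=\widetilde{A}$ and the equivalence follows. Your observation that smoothness of $Y$ may be weakened to normality is a useful addendum not made explicit in the paper.
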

If $X$ is not normal, there exists a unique \textit{normalisation} of $X$ -- normal affine variety $\widetilde{X}.$ In this case $H^0(\widetilde{X},\mathcal{O}_{\widetilde{X}})=H^0(Y,\mathcal{O}_Y),$ but $H^0(\widetilde{X},\mathcal{O}_{\widetilde{X}}) \neq H^0(X,\mathcal{O}_X).$ The proof of the proposition above is based on the universal property of the normalization and Zariski's Main Theorem \cite{Mum}.
\begin{definition}
Let $X$ be a normal affine variety, then $X$ has \textit{rational singularities} if $H^i(Y,\mathcal{O}_Y)=0$ for all $i>0.$
\end{definition}
Suppose a compact Lie group $G$ acts on the affine space $\mathbb{A}^N.$ Let $X\subset \mathbb{A}^N$ be a $G$-invariant subvariety. $Y$ is called an \textit{equivariant} smooth resolution of $X$ if $Y$ is smooth, $G$ acts on $Y,$ and the map $f\colon Y\rightarrow X$ is proper birational and $G$-equivariant.\par
Let $T$ be the maximal torus of $G.$ One of the natural questions that arises in \cite{RSz} is whether $\chi[H^0(X,\mathcal{O}_X)](t)$ is equal to $\chi\left[\sum(-1)^i H^i (Y,\mathcal{O}_Y)\right](t),$ $t \in T.$ Note that while $X$ is an affine variety and therefore $H^i(X,\mathcal{O}_X)=0$ for $i>0,$ this in not necessarily true for $H^i(Y,\mathcal{O}_Y).$
\begin{proposition}
	Let $G$ be a compact Lie group acting on $\mathbb{A}^N.$ Let $X \subset \mathbb{A}^N$ be a $G$-invariant subvariety, and let $Y$ be its smooth $G$-equivariant resolution. Let $T$ be the maximal torus of $G.$
The equality	
$$\chi[H^0(X,\mathcal{O}_X)](t) = \chi\left[\sum(-1)^i H^i (Y,\mathcal{O}_Y)\right](t),\ t\in T$$ 
holds if and only if $X$ has rational singularities.
\end{proposition}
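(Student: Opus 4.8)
The plan is to rephrase both conditions in terms of the derived pushforward $Rf_*\mathcal{O}_Y$. Since $X$ is affine, higher cohomology of quasi-coherent sheaves on $X$ vanishes, so the Leray spectral sequence gives $H^i(Y,\mathcal{O}_Y)=H^0(X,R^if_*\mathcal{O}_Y)$ for all $i$; because $f$ is $G$-equivariant these are $T$-modules (graded with finite-dimensional pieces, so that $\chi[\,\cdot\,](t)$ is well defined), and $\sum_i(-1)^i[H^i(Y,\mathcal{O}_Y)]$ is the image of $[Rf_*\mathcal{O}_Y]$ under $H^0(X,-)$. By definition $X$ has rational singularities exactly when $X$ is normal and $R^if_*\mathcal{O}_Y=0$ for $i>0$, i.e. when the natural morphism $\mathcal{O}_X\to Rf_*\mathcal{O}_Y$ is a quasi-isomorphism. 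So it is enough to prove: the character identity holds if and only if $\mathcal{O}_X\xrightarrow{\ \sim\ }Rf_*\mathcal{O}_Y$.

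First the easy implication. If $X$ has rational singularities then $X$ is normal, so by Proposition~\ref{pr} the pullback $f^{*}\colon H^0(X,\mathcal{O}_X)\to H^0(Y,\mathcal{O}_Y)$ is an isomorphism; it is $G$-equivariant because $f$ is, hence $\chi[H^0(X,\mathcal{O}_X)](t)=\chi[H^0(Y,\mathcal{O}_Y)](t)$. Since $H^i(Y,\mathcal{O}_Y)=0$ for $i>0$, the alternating sum collapses to $\chi[H^0(Y,\mathcal{O}_Y)](t)$ and the identity follows. Contrapositively, if the two characters differ then $X$ does not have rational singularities, which is the implication needed in the applications.

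For the converse, assume the identity. As $f$ is dominant and $X$ reduced, $f^{*}$ is an injection of $G$-modules, and by the discussion following Proposition~\ref{pr} its cokernel is $Q=H^0(\widetilde X,\mathcal{O}_{\widetilde X})/H^0(X,\mathcal{O}_X)$, with $\widetilde X$ the normalization; $Q$ is supported on the non-normal locus, and $Q=0$ iff $X$ is normal. Substituting $H^i(Y,\mathcal{O}_Y)=H^0(X,R^if_*\mathcal{O}_Y)$, the hypothesis becomes $\chi[Q](t)=\sum_{i\ge1}(-1)^{i-1}\chi[H^0(X,R^if_*\mathcal{O}_Y)](t)$, and the goal is to force $Q=0$ and $R^if_*\mathcal{O}_Y=0$ for $i>0$. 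My approach is to invoke Grothendieck--Serre duality together with the Grauert--Riemenschneider vanishing $R^qf_*\omega_Y=0$ $(q>0)$: the latter collapses $Rf_*\omega_Y$ to the single sheaf $f_*\omega_Y$, and duality then yields a $G$-equivariant isomorphism $Rf_*\mathcal{O}_Y\simeq R\mathcal{H}om_X(f_*\omega_Y,\omega_X^{\bullet})[-\dim X]$. Thus $\sum_i(-1)^i[H^i(Y,\mathcal{O}_Y)]$ is the equivariant (graded) dual of the \emph{honest} sheaf $f_*\omega_Y$, and — after reducing to the Cohen--Macaulay case — comparing it with $[H^0(X,\mathcal{O}_X)]$ amounts to comparing $f_*\omega_Y$ with $\omega_X$. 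Writing $A=H^0(X,\mathcal{O}_X)$, $\omega=H^0(X,\omega_X)$, $N=H^0(Y,\omega_Y)\subseteq\omega$ and $T=\omega/N$, and using $\mathrm{Hom}_A(\omega,\omega)=A$ and $\mathrm{Ext}^{>0}_A(\omega,\omega)=0$, the long exact sequence of $0\to N\to\omega\to T\to0$ turns the hypothesis into $\sum_i(-1)^i\chi[\mathrm{Ext}^i_A(T,\omega)](t)=0$. Now fix a one-parameter subgroup of $T$ grading $\mathbb{C}[\mathbb{A}^N]$ positively and pass to Hilbert series: if $T\neq0$ and $c=\operatorname{codim}T$, then $\mathrm{Ext}^i_A(T,\omega)=0$ for $i<c$, the term $\mathrm{Ext}^c_A(T,\omega)$ is the canonical module of $T$ and has pure dimension $\dim X-c$, while $\mathrm{Ext}^i_A(T,\omega)$ has dimension $<\dim X-c$ for $i>c$; hence the pole of order $\dim X-c$ at $q=1$ in the alternating sum comes from $\mathrm{Ext}^c_A(T,\omega)$ alone and cannot vanish — a contradiction. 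So $T=0$, i.e. $f_*\omega_Y=\omega_X$, and by the standard criterion for Cohen--Macaulay varieties this means $X$ has rational singularities; in particular $Q=0$.

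The crux is precisely this last non-cancellation step, and the point of the detour through duality is to make it available: a priori $\sum_{i\ge1}(-1)^{i-1}[H^i(Y,\mathcal{O}_Y)]$ is only a virtual module, and nothing elementary rules out a coincidence like $H^1(Y,\mathcal{O}_Y)\cong H^2(Y,\mathcal{O}_Y)$. What excludes it is the combination of Grauert--Riemenschneider vanishing (which replaces $Rf_*\mathcal{O}_Y$ by the dual of a single sheaf), the rigidity coming from compactness of $G$ — a $T$-module is recovered from its character — and a positive grading on $\mathbb{C}[\mathbb{A}^N]$, which together fix the pole orders of the relevant Hilbert series; the codimension estimates for the $R^if_*\mathcal{O}_Y$ that one needs along the way are standard once $f$ is factored through the normalization of $X$. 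The one remaining technical point is the reduction to the Cohen--Macaulay case: one must deduce from the identity that $X$ is Cohen--Macaulay, which requires running the same duality argument with the full dualizing complex $\omega_X^{\bullet}$ in place of $\omega_X$, and that is where most of the care is spent.
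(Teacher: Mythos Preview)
The paper does not actually prove this proposition: it is stated in the introduction and then the text moves on immediately to the definition of the $A_2$-locus. So there is no ``paper's own proof'' to compare against, and the only thing to assess is whether your argument stands on its own.

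Your forward implication is fine and is exactly the trivial direction: if $X$ is normal then $H^0(X,\mathcal{O}_X)\cong H^0(Y,\mathcal{O}_Y)$ by Proposition~\ref{pr}, and the higher $H^i(Y,\mathcal{O}_Y)$ vanish by the definition of rational singularities, so the alternating sum collapses. This is also the only direction the paper ever uses: Theorems~\ref{rat} and~\ref{notrat} proceed by checking directly whether $H^i(Y,\mathcal{O}_Y)$ vanishes for $i>0$.

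The converse, however, is where your write-up has genuine gaps. First, you ``fix a one-parameter subgroup of $T$ grading $\mathbb{C}[\mathbb{A}^N]$ positively'' and base the whole pole-order argument on it. Nothing in the hypotheses guarantees such a subgroup exists: a compact group can act on $\mathbb{A}^N$ with weights of both signs (e.g.\ $T=\mathbb{C}^*$ acting with weights $\pm 1$), and then no one-parameter subgroup gives a positive grading. Without it, the Hilbert-series comparison and the ``pole of order $\dim X-c$'' step do not get off the ground; indeed, even making sense of $\chi[H^0(X,\mathcal{O}_X)](t)$ as a well-defined formal character already needs some finiteness of weight spaces that you are tacitly importing from such a grading. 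Second, you explicitly flag the reduction to the Cohen--Macaulay case as ``the one remaining technical point'' and then do not carry it out; since the whole comparison $f_*\omega_Y$ versus $\omega_X$ and the identities $\mathrm{Hom}_A(\omega,\omega)=A$, $\mathrm{Ext}^{>0}_A(\omega,\omega)=0$ rest on that reduction, the argument as written is circular at this step. Third, even granting Grauert--Riemenschneider and duality, the passage from ``$\sum_i(-1)^i\chi[\mathrm{Ext}^i_A(T,\omega)](t)=0$'' to ``$T=0$'' via pole orders needs a careful statement about the dimensions of the $\mathrm{Ext}^i_A(T,\omega)$ for $i>c$; you assert these are strictly smaller than $\dim X-c$, but that requires $A$ to be Cohen--Macaulay (or at least equidimensional with a dualizing module), which is again the unproved reduction.

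In short: the easy half is correct; the hard half is a reasonable strategy but, as written, relies on an unjustified positivity assumption and leaves the key Cohen--Macaulay reduction as a promise. Since the paper never proves (and never uses) this converse, the honest summary is that the proposition is stated more sharply than what is established, and your attempt does not close that gap.
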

In this paper we study whether certain singularity loci have rational singularities. To give the definition of the main object of this paper, the $A_2$-locus, we recall the necessary notions of singularity theory. For a  more detailed introduction see \cite{BSz} or \cite{Kol}.\par
Denote by $x_1,\dots,x_n$ coordinates on $\mathbb{C}^n.$ We introduce the notation $$J(n)=\{h \in \mathbb{C}[[x_1,\mydots,x_n]]\ |\ h(0)=0\}$$ for the algebra of power series without constant term, $\langle \overline{\text{x}}^{d+1} \rangle$ for the ideal generated by monomials in $x_1,\mydots, x_n$ of degree $d+1$ and $J_d(n)=J(n)/\langle \overline{\text{x}}^{d+1}\rangle$ for the space of $d$-jets of holomorphic functions near the origin.\par
Let $J_d(n,k)$ be the space of $d$-jets of holomorphic maps $(\mathbb{C}^n,0)\rightarrow (\mathbb{C}^k,0):$
$$J_d(n,k)=\operatorname{Hom}(\mathbb{C}^k, J_d(n)).$$\par
An element of this space can be thought of as a $k$-tuple of elements of $J_d(n):$ 
$$J_d(n,k)\cong\{(P_1,\mydots,P_k)\ |\ P_i\in J_d(n)\}.$$\par
$J_d(n,k)$ is a finite-dimensional complex vector space equipped with $\operatorname{Gl}(n)\times \operatorname{Gl}(k)$-action. In this paper we will consider $n\leq k.$ \par
We will call an algebra $N$ \textit{nilpotent} if it is finite dimensional and if there exists a natural number $m$ such that the product of each $m$ elements of the algebra vanishes, that is, $N^m=0.$ $J_d(n)$ is nilpotent: $(J_d(n))^{d+1}=0,$ the algebra $J_d(1)$ is often denoted by $A_d= t\mathbb{C}[t]/t^{d+1}.$
\begin{definition} An algebra $C$ is $(1,1,\mydots,1)$-\textit{filtered} if $C$ has an increasing finite sequence of subspaces $\{0\}\subset F_m\subset F_{m-1}\subset \mydots \subset F_{1} \subset F_0=C$ such that $F_i\cdot F_j \subset F_{i+j}$ and $\operatorname{dim}F_i/F_{i+1}=1.$
\end{definition}
Nilpotent algebras have a natural filtration: $\{0\}\subset N^{m-1}\subset N^{m-2}\subset \mydots \subset N^2\subset N.$ In case of $A_d$, this filtration is a $(1,1,\mydots,1)$-filtration. 
\begin{definition}
	$A_d$-\textit{singularity locus} is given by $$\Theta_{A_d}^{n,k}=\overline{{{\{(P_1,\dots,P_k)\in J_d(n,k)\ |\ J_d(n)/\langle P_1,\dots,P_k\rangle \cong A_d\}}}}.$$
\end{definition}
$\Theta_{A_d}^{n,k}$ is a $\operatorname{Gl}(n)\times \operatorname{Gl}(k)$-invariant affine subvariety in $J_d(n,k).$\par
This paper is devoted to the study of the rationality of the singularities of $\widetilde{\Theta_{A_2}^{n,k}}.$ \par
Let us briefly look at a simpler case, the $A_1$-locus:
$$\Theta_{A_1}^{n,k}=\overline{\{M\in \operatorname{Hom}(\mathbb{C}^n,\mathbb{C}^k)\ |\ \operatorname{rk}M<n\}},$$
i.e. for every $M\in \Theta_{A_1}^{n,k}$ there exists a non-zero eigenvector $v\in \mathbb{C}^n$ such that $Mv=0.$
\begin{proposition}
The space 
$$\{(M,v)\ |\ Mv=0,\ M\in \operatorname{Hom}(\mathbb{C}^n,\mathbb{C}^k),\ v\in \mathbb{C}^{n}\}\subset \operatorname{Hom}(\mathbb{C}^n,\mathbb{C}^k)\times \mathbb{P}^{n-1}$$
is an equivariant smooth resolution of $\Theta_{A_1}^{n,k}.$ 
\end{proposition}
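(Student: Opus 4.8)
The plan is to recognize the incidence variety $Z:=\{(M,v)\ |\ Mv=0\}\subset\operatorname{Hom}(\mathbb{C}^n,\mathbb{C}^k)\times\mathbb{P}^{n-1}$ as the total space of a vector bundle over $\mathbb{P}^{n-1}$, which makes its smoothness transparent, and then to verify that the first projection $f\colon Z\to\Theta_{A_1}^{n,k}$, $(M,v)\mapsto M$, is proper, surjective, birational and equivariant. For smoothness I would use the second projection $\pi\colon Z\to\mathbb{P}^{n-1}$: over a point $[v]$ the fibre is the linear space $\{M\ |\ Mv=0\}$ of homomorphisms annihilating the line $\mathbb{C}v$, of constant dimension $k(n-1)$ since $v\neq 0$. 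Concretely, writing $S\subset\mathbb{C}^n\otimes\mathcal{O}_{\mathbb{P}^{n-1}}$ for the tautological line subbundle, $Z$ is the total space of $\operatorname{Hom}\bigl((\mathbb{C}^n\otimes\mathcal{O})/S,\,\mathbb{C}^k\otimes\mathcal{O}\bigr)$; being a vector bundle over a smooth projective variety it is smooth, of dimension $(n-1)+k(n-1)=(k+1)(n-1)$, which coincides with $\dim\Theta_{A_1}^{n,k}$ (the determinantal variety $\{\operatorname{rk}\le n-1\}$ has codimension $k-n+1$ in $\operatorname{Hom}(\mathbb{C}^n,\mathbb{C}^k)$).

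For properness, note that $Z$ is Zariski closed in the product, being cut out by the bilinear equations $Mv=0$, and that the projection of $\operatorname{Hom}(\mathbb{C}^n,\mathbb{C}^k)\times\mathbb{P}^{n-1}$ to the first factor is proper because $\mathbb{P}^{n-1}$ is complete; hence $f$ is proper. Its image is $\{M\ |\ \ker M\neq 0\}=\{M\ |\ \operatorname{rk}M<n\}$, which is already closed (vanishing of the $n\times n$ minors) and equals $\Theta_{A_1}^{n,k}$ by definition, so $f$ is surjective. For birationality I would restrict over the open subset $U\subset\Theta_{A_1}^{n,k}$ of matrices of rank exactly $n-1$: this $U$ is dense because $\{\operatorname{rk}\le n-1\}$ is irreducible and $U$ is a nonempty open subset of it, and over $U$ the kernel is a single line, so $f^{-1}(U)\to U$ is a bijection with the manifestly regular inverse $M\mapsto(M,\ker M)$, hence an isomorphism. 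Thus $f$ is birational and $Z$ is a smooth resolution of $\Theta_{A_1}^{n,k}$.

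Equivariance is a direct check: letting $(g,h)\in\operatorname{Gl}(n)\times\operatorname{Gl}(k)$ act by $M\mapsto hMg^{-1}$ on $\operatorname{Hom}(\mathbb{C}^n,\mathbb{C}^k)$ and by $[v]\mapsto[gv]$ on $\mathbb{P}^{n-1}$, one has $Mv=0$ if and only if $(hMg^{-1})(gv)=hMv=0$, so $Z$ is invariant and $f$ is equivariant; restricting to the maximal compact subgroup $U(n)\times U(k)$ (and its maximal torus) yields equivariance in the sense of the propositions above. The only ingredient that is not a purely formal verification is the density of the top-rank stratum used for birationality, which rests on the standard irreducibility and dimension theory of determinantal varieties; I would expect to cite this rather than reprove it, and I regard it as the single point requiring external input.
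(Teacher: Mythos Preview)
Your proposal is correct and follows essentially the same approach as the paper: both identify the incidence variety with the total space of the vector bundle $\operatorname{Hom}(Q,\mathbb{C}^k)$ over $\mathbb{P}^{n-1}$ via the tautological sequence $L\hookrightarrow\mathbb{C}^n\twoheadrightarrow Q$, which is exactly your $\operatorname{Hom}\bigl((\mathbb{C}^n\otimes\mathcal{O})/S,\mathbb{C}^k\otimes\mathcal{O}\bigr)$. The paper only sketches this vector-bundle identification and leaves properness, birationality and equivariance implicit, whereas you spell these out carefully; your treatment is more complete but not different in substance.
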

This space can be understood as follows: let us fix an element $v\in \mathbb{P}^{n-1}$ and describe the set $\{M \in \operatorname{Hom}(\mathbb{C}^n,\mathbb{C}^k)\ |\ Mv=0\}.$\par
There is a tautological sequence of vector bundles on $\mathbb{P}^{n-1}:$
 \begin{center}
\begin{tikzpicture}
  \matrix (m) [matrix of math nodes,row sep=3em,column sep=4em,minimum width=2em]
  {
     \mathcal{O}(-1)=L & \mathbb{C}^n & Q \\
     & \mathbb{P}^{n-1}  & \\};
  \path[-stealth]
    (m-1-1) edge node {} (m-1-2)
    (m-1-2) edge node {} (m-1-3)
    (m-1-2) edge node {} (m-2-2)
            ;
\end{tikzpicture}\\
\end{center}
We can apply $\operatorname{Hom}(*,\mathbb{C}^k)$ to it and obtain the following sequence:
\begin{center}
\begin{tikzpicture}
  \matrix (m) [matrix of math nodes,row sep=3em,column sep=4em,minimum width=2em]
  {
     \operatorname{Hom}(Q,\mathbb{C}^k) & \operatorname{Hom}(\mathbb{C}^n,\mathbb{C}^k) & \operatorname{Hom}(L,\mathbb{C}^k) \\
     & \mathbb{P}^{n-1} & \\};
  \path[-stealth]
    (m-1-1) edge node {} (m-1-2)
    (m-1-2) edge node {} (m-1-3)
    (m-1-2) edge node {} (m-2-2)
            ;
\end{tikzpicture}\\
\end{center}
The map $\operatorname{Hom}(\mathbb{C}^n,\mathbb{C}^k) \rightarrow \operatorname{Hom}(L,\mathbb{C}^k)$ can be interpreted as the evaluation map $M\mapsto Mv$ for a fixed $v\in \mathbb{P}^{n-1}.$ Its kernel is exactly $\operatorname{Hom}(Q,\mathbb{C}^k).$\par
The equivariant smooth resolution of $\Theta_{A_1}^{n,k}$ defined above may be presented as the following vector bundle:
\begin{center}
\begin{tikzpicture}
  \matrix (m) [matrix of math nodes,row sep=3em,column sep=4em,minimum width=2em]
  {
     \operatorname{Hom}(Q,\mathbb{C}^k) & \Theta_{A_1}^{n,k} \\
      \mathbb{P}^{n-1} & \\};
  \path[-stealth]
    (m-1-1) edge node {} (m-2-1)
    (m-1-1) edge node {} (m-1-2)
            ;
\end{tikzpicture}\\
\end{center}\par
It is well-known that $\Theta_{A_1}^{n,k}$ has rational singularities. In this paper we study the rationality of the singularities of $\Theta_{A_2}^{n,k}$ and prove the following theorems.
\begin{theorem}
$\widetilde{\Theta_{A_2}^{n,k}}$ in general can have singularities worse than rational.
\end{theorem}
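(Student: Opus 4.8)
The plan is to produce, for some $n$ and $k$, an equivariant smooth resolution $f\colon Y\to\widetilde{\Theta_{A_2}^{n,k}}$ and an index $i>0$ with $H^i(Y,\mathcal{O}_Y)\neq 0$; by the last Proposition this is exactly the failure of rationality, equivalently the failure of $\chi\bigl[\sum(-1)^iH^i(Y,\mathcal{O}_Y)\bigr](t)$ to agree with the equivariant Hilbert series $\chi\bigl[H^0(\widetilde{\Theta_{A_2}^{n,k}},\mathcal{O})\bigr](t)$. I will carry this out already for $n=2$ and $k\ge 2$ (for $n=2$, $k=1$ one checks $\dim\Theta_{A_2}^{2,1}=\dim J_2(2,1)$, so the locus is the whole smooth space); the general case follows by the same construction, or by a suspension argument realizing $\widetilde{\Theta_{A_2}^{2,k}}$ as an \'etale‑local factor of $\widetilde{\Theta_{A_2}^{n,k}}$.

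\emph{The resolution.} As in the $A_1$‑case, where one remembers the kernel vector $v\in\mathbb{P}^{n-1}$, here one remembers the associated \emph{test curve}: for a generic $P=(P_1,\dots,P_k)$ with $J_2(n)/\langle P\rangle\cong A_2$ there is, unique up to reparametrization of the source, a $2$-jet of a map $(\mathbb{C},0)\to(\mathbb{C}^n,0)$ whose induced algebra map $\phi\colon J_2(n)\to A_2$ satisfies $\ker\phi=\langle P\rangle$. Let $\mathcal{B}$ be the (equivariant) parameter space of such reparametrized $2$-jets: its dense open part $\mathcal{B}^{\circ}$, where the jet is nondegenerate, is the total space of $Q\otimes\mathcal{O}(2)$ over $\mathbb{P}^{n-1}$ --- the $\mathbb{P}^{n-1}$ records the tangent line and the $Q\otimes\mathcal{O}(2)$‑fibre the second‑order term modulo reparametrization --- and for $n=2$ this is $\operatorname{Tot}\bigl(\mathcal{O}_{\mathbb{P}^1}(3)\bigr)$. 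Over $\mathcal{B}^{\circ}$ the incidence variety $\{(\phi,P):\operatorname{im}P\subseteq\ker\phi\}$ is the total space of the vector bundle $\operatorname{Hom}(\mathbb{C}^k,\mathcal{K})$, where $\mathcal{K}=\ker\bigl(\underline{J_2(n)}\xrightarrow{\ \Phi\ }\mathcal{A}\bigr)$ is the tautological ``kernel bundle'' and $\Phi$ the universal evaluation along the jet; this $Y^{\circ}$ has the correct dimension $2n-2+k(\dim J_2(n)-2)=\dim\Theta_{A_2}^{n,k}$, which already rules out the cruder model remembering only $v$ (it has dimension $n-1$ too small, precisely because it forgets the second‑order layer). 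The work of this step is to compactify $\mathcal{B}^{\circ}$, across the locus of degenerate jets (the $P$'s with $\operatorname{im}P\subseteq J_2(n)^2$ and the like), by an explicit equivariant weighted blow‑up, so that the resulting $Y$ is smooth and $f$ proper; birationality then holds by uniqueness of the generic test curve.

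\emph{The cohomology.} Since $Y\to\mathcal{B}$ is a vector bundle, the affine pushforward together with the Cauchy formula gives
$$H^{\bullet}(Y,\mathcal{O}_Y)\;=\;\bigoplus_{j\ge 0}H^{\bullet}\!\bigl(\mathcal{B},\operatorname{Sym}^j(\mathbb{C}^k\otimes\mathcal{K}^{\vee})\bigr)\;=\;\bigoplus_{\lambda}\mathbb{S}_{\lambda}(\mathbb{C}^k)\otimes H^{\bullet}\!\bigl(\mathcal{B},\mathbb{S}_{\lambda}\mathcal{K}^{\vee}\bigr),$$
and pushing down $\mathcal{B}\to\mathbb{P}^{n-1}$ expresses each term as cohomology on $\mathbb{P}^{n-1}$ of a bundle built from $\mathcal{O}(m)$, $Q$, $\Omega^1$ and Schur functors, which Bott's theorem computes. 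One then isolates a summand with nonvanishing cohomology in positive degree: the $\operatorname{Tot}(\mathcal{O}_{\mathbb{P}^1}(3))$‑geometry of $\mathcal{B}$ injects enough negative twists into the relevant $\mathbb{S}_{\lambda}\mathcal{K}^{\vee}$ that on $\mathbb{P}^1$ some $H^1(\mathbb{P}^1,\mathcal{O}(-m))$ with $m\ge 2$ survives, whence $H^1(Y,\mathcal{O}_Y)\neq 0$ and $\widetilde{\Theta_{A_2}^{2,k}}$ is not rational. As the theorem only asserts this ``in general'', $n=2$ suffices.

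\emph{Main obstacle.} The delicate part is the resolution itself --- making the test‑curve incidence variety into a smooth, proper, equivariant model across the degenerate‑jet divisor, in the spirit of the non‑reductive quotient resolutions used for Thom polynomials --- and, hand in hand with it, verifying that the positive‑degree cohomology genuinely lives on the compactified base $\mathcal{B}$ and not merely on the non‑compact open part $\mathcal{B}^{\circ}$: already compactifying $\operatorname{Tot}(\mathcal{O}_{\mathbb{P}^1}(3))$ fibrewise by $\mathbb{P}^1$ would kill $H^1$ of the structure sheaf, so the surviving classes must be followed through the precise model and the $\mathcal{K}^{\vee}$‑twists, not just $\mathcal{O}_Y$. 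A route that sidesteps this, closer to the residue philosophy of \cite{BSz,RSz}, is to compute $\chi[\sum(-1)^iH^i(Y,\mathcal{O}_Y)](t)$ by equivariant localization on $Y$ --- a Jeffrey--Kirwan‑type residue --- and to compare it with the equivariant Hilbert series of the coordinate ring of $\Theta_{A_2}^{2,k}$ directly; a discrepancy certifies non‑rationality.
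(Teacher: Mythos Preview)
Your proposal is a sketch with explicitly acknowledged gaps (the ``Main obstacle'' paragraph concedes that neither the compactification of $\mathcal{B}^\circ$ nor the survival of the putative $H^1$ has actually been carried out), but the more serious problem is that your chosen example is \emph{wrong}: $\widetilde{\Theta_{A_2}^{2,k}}$ has rational singularities for every $k\ge 2$. To see this, use the Kazarian resolution already built in Section~2.1 --- there is no need to manufacture a test-curve model from scratch; your $\mathcal{B}^\circ=\operatorname{Tot}(\mathcal{O}_{\mathbb{P}^1}(3))$ is just the open chart of $\mathbb{P}(Q_1\oplus L_1^2)\to\mathbb{P}^1$. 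After the first pushforward the question reduces, via Resolution~2, to computing $H^i(\mathbb{P}^1,G_l)$ where $G_l$ is resolved by a complex $F_{k-1}\to\cdots\to F_1\to F_0$ on $\mathbb{P}^1$. In the hypercohomology spectral sequence $E_1^{-j,q}=H^q(\mathbb{P}^1,F_j)\Rightarrow H^{q-j}(\mathbb{P}^1,G_l)$, a contribution to $H^i(G_l)$ with $i>0$ requires $q=i+j\le 1$ and $j\ge 0$, hence $j=0$, $i=1$. But $F_0=\operatorname{Sym}^l\bigl((\operatorname{Sym}^2\mathbb{C}^2\oplus Q_1)\otimes\mathbb{C}^k\bigr)$ is, on $\mathbb{P}^1$, a direct sum of line bundles $\mathcal{O}(a)$ with $a\ge 0$ (since $Q_1\cong\mathcal{O}(1)$), so $H^1(\mathbb{P}^1,F_0)=0$ and therefore $H^i(Y,\mathcal{O}_Y)=0$ for all $i>0$. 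Your heuristic that ``the $\operatorname{Tot}(\mathcal{O}_{\mathbb{P}^1}(3))$-geometry injects enough negative twists'' fails because those negative twists land in $F_j$ with $j\ge 1$, where they can only affect $H^0(G_l)$ (i.e.\ non-normality, as in the paper's final Remark), never $H^{>0}$. The vague ``suspension'' from $n=2$ to general $n$ cannot rescue this: you would be transporting rationality, not its failure.

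The paper's proof instead exhibits $(n,k)=(5,7)$, $l=7$. Here $\mathbb{P}^{n-1}=\mathbb{P}^4$ has cohomological dimension $4$, and the leftmost term $F_3$ of Resolution~2 contains the summand $\det Q_1\otimes L_1^6$, for which Borel--Weil--Bott gives $H^4(\mathbb{P}^4,\det Q_1\otimes L_1^6)\neq 0$; a direct check shows that all cohomology of the intermediate terms $F_1,F_2$ vanishes, so this class at position $(-3,4)$ survives the spectral sequence and forces $H^1(\mathbb{P}^4,G_7)\neq 0$. The mechanism needs both $H^{n-1}$ on the leftmost term (forcing $2(k-n+1)>n$) and a resolution short enough that this $H^{n-1}$ lands in positive total degree (forcing $k-n+1<n-1$); these two inequalities have no integer solution for $n\le 4$ and are first simultaneously satisfied at $(n,k)=(5,7)$, which is why a low-dimensional source such as $n=2$ cannot furnish the counterexample.
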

\begin{theorem}
$\widetilde{\Theta_{A_2}^{n,n}}$ has rational singularities.	
\end{theorem}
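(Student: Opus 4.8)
The plan is to produce an explicit smooth $\operatorname{Gl}(n)\times\operatorname{Gl}(n)$-equivariant resolution of $\Theta_{A_2}^{n,n}$ that is the total space of a vector bundle over $\mathbb P^{n-1}\times\mathbb P^{n-1}$, and then to read off $H^{>0}(Y,\mathcal O_Y)=0$ by pushing down and applying Bott's theorem. Write a jet $(P_1,\dots,P_n)\in J_2(n,n)$ as a pair $(M,\tau)$, where $M\in\operatorname{Hom}(\mathbb C^n,\mathbb C^n)$ is its linear part and $\tau\in W:=\mathbb C^n\otimes\operatorname{Sym}^2(\mathbb C^n)^{*}$ its quadratic part (the first factor being the target space, the second the source). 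First I would redo the local-algebra bookkeeping of the definition to show that $J_2(n)/\langle P_1,\dots,P_n\rangle\cong A_2$ exactly when $\operatorname{rk}M=n-1$ and, writing $\langle v\rangle=\ker M\subset\mathbb C^n$ and $\langle w\rangle=\operatorname{Ann}(\operatorname{im}M)\subset(\mathbb C^n)^{*}$, the quadratic part satisfies $w\big(\tau(v,v)\big)=0$, i.e.\ $\tau(v,v)\in\operatorname{im}M$ (and that no further degeneracy occurs, so that the codimension is exactly $2$).

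This motivates the incidence variety
\[
Y=\big\{(\langle v\rangle,\langle w\rangle,M,\tau)\in\mathbb P(\mathbb C^n)\times\mathbb P((\mathbb C^n)^{*})\times\operatorname{Hom}(\mathbb C^n,\mathbb C^n)\times W\ :\ Mv=0,\ w^{\mathsf T}M=0,\ w(\tau(v,v))=0\big\}.
\]
Over a fixed $(\langle v\rangle,\langle w\rangle)$ the fibre is the linear space $\operatorname{Hom}(Q_v,K_w)\oplus\{\tau: w(\tau(v,v))=0\}$, where $Q_v=\mathbb C^n/\langle v\rangle$ and $K_w=w^{\perp}\subset\mathbb C^n$; these have constant dimension, so $Y$ is the total space of a vector bundle $\mathcal E=\mathcal A\oplus\mathcal B$ on $\mathbb P^{n-1}\times\mathbb P^{n-1}$, with $\mathcal A=Q_v^{\vee}\otimes K_w$ and $\mathcal B=\ker\!\big(\underline W\twoheadrightarrow\mathcal O(-2,-1)\big)$, the surjection being $\tau\mapsto w(\tau(v,v))$ — a section of $\mathcal O(2,1)$, since $v^{\otimes2}$ twists by $\mathcal O(-2,0)$ and $w$ by $\mathcal O(0,-1)$. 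In particular $Y$ is smooth. It is cut out by closed conditions inside $\mathbb P^{n-1}\times\mathbb P^{n-1}\times J_2(n,n)$, so the projection $f\colon Y\to J_2(n,n)$ is proper; it is equivariant; and over a generic $A_2$-jet $(M,\tau)$ both $\langle v\rangle$ and $\langle w\rangle$ are forced, so $f$ is birational onto its image, which a dimension count identifies with $\Theta_{A_2}^{n,n}$. Since $Y$ is smooth and $f$ is proper and birational, $f$ factors through the normalisation, so $Y$ is a smooth resolution of $\widetilde{\Theta_{A_2}^{n,n}}$.

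Because $\widetilde{\Theta_{A_2}^{n,n}}$ is normal, by the definition of rational singularities (and the resolution-independence of $H^{\bullet}(Y,\mathcal O_Y)$) it suffices to prove $H^{i}(Y,\mathcal O_Y)=0$ for $i>0$. Pushing forward along $\pi\colon Y\to\mathbb P^{n-1}\times\mathbb P^{n-1}$ gives $H^{i}(Y,\mathcal O_Y)=\bigoplus_{m\ge0}H^{i}\big(\mathbb P^{n-1}\times\mathbb P^{n-1},\operatorname{Sym}^{m}\mathcal E^{\vee}\big)$. Now $\mathcal A^{\vee}=Q_v\otimes Q_w$ (using $K_w^{\vee}\cong Q_w$), so Cauchy's formula gives $\operatorname{Sym}^{a}\mathcal A^{\vee}=\bigoplus_{\lambda\vdash a}\mathbb S_{\lambda}Q_v\boxtimes\mathbb S_{\lambda}Q_w$, while dualising $0\to\mathcal B\to\underline W\to\mathcal O(-2,-1)\to0$ and taking symmetric powers yields the Koszul resolution $0\to\mathcal O(2,1)\otimes\underline{\operatorname{Sym}^{b-1}W^{\vee}}\to\underline{\operatorname{Sym}^{b}W^{\vee}}\to\operatorname{Sym}^{b}\mathcal B^{\vee}\to0$. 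Combining these, every summand of $\operatorname{Sym}^{m}\mathcal E^{\vee}$ is resolved by a two-term complex whose terms are direct sums of bundles of the form $\big(\mathbb S_{\lambda}Q_v\otimes\mathcal O(c)\big)\boxtimes\big(\mathbb S_{\lambda}Q_w\otimes\mathcal O(c')\big)$ with $c,c'\ge0$. By Bott's theorem $H^{>0}\big(\mathbb P^{n-1},\mathbb S_{\lambda}Q\otimes\mathcal O(c)\big)=0$ for every partition $\lambda$ and every $c\ge0$; with the Künneth formula each such bundle has cohomology concentrated in degree $0$, and the long exact sequence of the two-term complex then forces $H^{>0}$ of each summand to vanish. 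Summing over $m$ gives $H^{>0}(Y,\mathcal O_Y)=0$, which proves the theorem.

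The main obstacle is Step 1: showing that $Y$ really is a smooth, proper, birational model of $\Theta_{A_2}^{n,n}$. The two delicate points are the precise algebraic translation of ``$J_2(n)/\langle P_i\rangle\cong A_2$'' into the three equations above (in particular that no extra component or non-reduced behaviour enters the image of $f$, so that it is exactly $\overline{A_2}$ rather than something larger or smaller), and the verification that those three conditions cut out a sub-bundle of $\underline{J_2(n,n)}$ of constant rank. It is here that the hypothesis $k=n$ is used decisively: only because the cokernel of a generic rank-$(n-1)$ matrix $\mathbb C^{n}\to\mathbb C^{n}$ is one-dimensional do we obtain the second compactifying parameter $\langle w\rangle\in\mathbb P^{n-1}$ and, consequently, only the nonnegative twists $\mathcal O(c,c')$ that make Step 2 work. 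For $k>n$ the analogous construction must be carried out over a Grassmannian of $(k-n+1)$-planes, negative twists appear, and the cohomology vanishing breaks down — which is what the preceding theorem records. The remaining ingredients (the Cauchy/Koszul decomposition and the appeal to Bott vanishing) are routine.
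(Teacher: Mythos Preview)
Your resolution is genuinely different from the paper's. The paper works with Kazarian's model, a vector bundle over the twisted projective bundle $\mathbb{P}(Q_1\oplus L_1^2)\to\mathbb{P}^{n-1}$, and pushes forward in two stages; specialising its ``Resolution~2'' to $k=n$ collapses it to a two-term complex whose pieces are shown via Borel--Weil--Bott to carry only $H^0$. You instead exploit the equidimensional feature that a corank-one linear part has both a kernel line and a cokernel line, producing an incidence variety over the \emph{product} $\mathbb{P}^{n-1}\times\mathbb{P}^{n-1}$. Your identification of the open $A_2$-stratum with the condition $\tau(v,v)\in\operatorname{im}M$ (equivalently $w(\tau(v,v))=0$) is correct, and the sub-bundle, properness and birationality checks you outline go through.

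There is, however, a sign error in the last step that invalidates the vanishing argument as written. The contraction tensor $v^{\otimes 2}\otimes w$ lives in $L_v^2\boxtimes L_w\subset\underline{W^\vee}$, so the evaluation is a surjection $\underline W\twoheadrightarrow (L_v^2\boxtimes L_w)^\vee=\mathcal O(2,1)$, not onto $\mathcal O(-2,-1)$. Dualising, the sub-line-bundle of $\underline{W^\vee}$ is $\mathcal O(-2,-1)$, and the left term in the Koszul resolution of $\operatorname{Sym}^b\mathcal B^\vee$ is $\mathcal O(-2,-1)\otimes\operatorname{Sym}^{b-1}W^\vee$. Hence the twists that actually occur are $(c,c')\in\{(0,0),(-2,-1)\}$, and your claim ``$c,c'\ge0$'' is false: $\Sigma^\lambda Q\otimes L^2$ can carry $H^1$ (take any $\lambda$ with $\lambda_{n-1}=0$, $\lambda_{n-2}\ge1$). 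The argument is salvageable --- by Borel--Weil--Bott, $\Sigma^\lambda Q_v\otimes L_v^2$ has cohomology only in degrees $\le 1$ and $\Sigma^\lambda Q_w\otimes L_w$ only in degree $0$ (these are exactly the two computations in the paper's proof), so by K\"unneth the left Koszul term tensored with $\operatorname{Sym}^a\mathcal A^\vee$ has $H^{\ge 2}=0$, and since the middle term has only $H^0$ the long exact sequence still forces $H^{\ge 1}(\operatorname{Sym}^m\mathcal E^\vee)=0$ --- but this repair must be made explicit.
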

Before proving the main theorems, we recall the explicit construction for the equivariant smooth resolution of $\Theta_{A_2}^{n,k},$ the Borel-Weil-Bott theorem, and demonstrate the spectral sequences technique that will allow us to study the rationality of the singularities of the $A_2$-loci.\\
\noindent \textbf{Acknowledgements.} I would like to thank my thesis advisor Andr\'as Szenes for his guidance, and Rich\'ard Rim\'anyi, Anton Fonarev, Maxim Kazarian and S\'andor Kov\'acs for useful discussions.
\section{Preliminaries}  
\subsection{Equivariant smooth resolution of the $\bold{A_2}$-locus}
 In this section we recall an explicit construction for the equivariant smooth resolution of the $A_2$-locus following \cite{Kaz}. The general case is discussed in \cite{Kaz} and \cite{BSz}.\par
 Before we present the equivariant smooth resolution of $\Theta_{A_2}^{n,k},$ we need to introduce some preliminary notions.
 \begin{definition}
	The \textit{curvilinear Hilbert scheme} of order 2 is defined as follows: $$\opn{Hilb}_{A_2}(\mathbb{C}^n)\cong\overline{\{I \subset J_2(n)\ |\ J_2(n)/I\cong A_2\}}.$$
\end{definition}
Each ideal $I\in \opn{Hilb}_{A_2}(\mathbb{C}^n)$ comes with the tautological sequence:\\
\begin{center}
\begin{tikzpicture}
  \matrix (m) [matrix of math nodes,row sep=3em,column sep=4em,minimum width=2em]
  {
     I & J_2(n) & N\cong J_2(n)/I \\};
  \path[-stealth]
    (m-1-1) edge node {} (m-1-2)
    (m-1-2) edge node {} (m-1-3)
            ;
\end{tikzpicture}\\
\end{center}
To construct a smooth equivariant resolution of $\Theta_{A_2}^{n,k}$ we start with the following vector bundle: \\
\begin{center}
\begin{tikzpicture}
  \matrix (m) [matrix of math nodes,row sep=3em,column sep=4em,minimum width=2em]
  {
     \operatorname{Hom}(\mathbb{C}^k,I) & \Theta_{A_2}^{n,k} \\
     \opn{Hilb}_{A_2}(\mathbb{C}^n) \\};
  \path[-stealth]
    (m-1-1) edge node {} (m-1-2)
    (m-1-1) edge node {} (m-2-1)
            ;
\end{tikzpicture}\\
\end{center}
The fiber over $I \in \opn{Hilb}_{A_2}(\mathbb{C}^n)$ is the space of all $k$-tuples of elements of $I.$ The set of $k$-tuples of elements of $I$ that generate $I$ is Zariski open in $\operatorname{Hom}(\mathbb{C}^k,I)$ and the projection $\operatorname{Hom}(\mathbb{C}^k,I) \twoheadrightarrow J_d(n,k)\supset \Theta_{A_2}^{n,k}$ is proper.\par
This vector bundle is not a smooth equivariant resolution of $\Theta_{A_2}^{n,k}$ because $\opn{Hilb}_{A_2}(\mathbb{C}^n)$ is not smooth. The next step is to find a smooth equivariant resolution of $\opn{Hilb}_{A_2}(\mathbb{C}^n)$.\par
Since every $I\in \opn{Hilb}_{A_2}(\mathbb{C}^n)$ is equipped with the tautological sequence mentioned above, we can rewrite $\opn{Hilb}_{A_2}(\mathbb{C}^n)$ as
$$
\bigslant{\opn{Hilb}_{A_2}(\mathbb{C}^n)={\{f\colon J_2(n)\rightarrow N\ |\ \operatorname{dim}N=2,\ f\ \text{-- surj. alg. homomorphism}}\}}{\sim}
$$
The equivalence relation is defined as follows: $f\sim f'$ if the diagram commutes:
\begin{center}
\begin{tikzpicture}
  \matrix (m) [matrix of math nodes,row sep=3em,column sep=4em,minimum width=2em]
  {
    &  N\\
     J_2(n,k)\\
      & N\\};
  \path[-stealth]
    (m-2-1) edge node[above] {$f$} (m-1-2)
    (m-2-1) edge node [below]{$f'$} (m-3-2)
    (m-1-2) edge node [right]{$\cong$} (m-3-2)
    (m-3-2) edge node {} (m-1-2)
            ;
\end{tikzpicture}\\
\end{center}

We will be interested in $(1,1)$-filtered $2$-dimensional nilpotent algebras. There are two different types of them: 
\begin{itemize}
	\item $A_2$ with the natural $(1,1)$-filtration: $A_2^2\subset A_2,$
	\item algebra $N$ generated by two elements, such that the product of any two elements of $N$ is 0. This algebra does not have a natural $(1,1)$-filtration, so we introduce an artificial $(1,1)$-filtration $F_1\subset N,$ where $F_1$ is any line in $N.$ 
	\end{itemize}\par
	Let us introduce the notation for filtered algebra homomorphisms. Suppose  filtered algebras $N$ and $C$. We will denote a homomorphism compatible with the filtrations on $N$ and $C$ by
	$$f\colon N \xrightarrow[]{\Delta} C$$
\begin{proposition}
	The smooth equivariant resolution of $\opn{Hilb}_{A_2}(\mathbb{C}^n)$ is given by
	\begin{equation*}
		\widehat{\opn{Hilb}}_{A_2}(\mathbb{C}^n)=\bigslant{{\{f\colon J_2(n)\xrightarrow[]{\Delta} N\ |\ N\ \text{--}\ 2\text{-dim.}\ (1,1)\text{-filt.,}\ f\text{ -- surj.}\}}}{\sim},
	\end{equation*}
	The equivalence is taken up to a filtered algebra isomorphism:
	\begin{center}
\begin{tikzpicture}
  \matrix (m) [matrix of math nodes,row sep=3em,column sep=4em,minimum width=2em]
  {
    &  N\\
     J_2(n,k)\\
      & N\\};
  \path[-stealth]
    (m-2-1) edge node[above] {$\Delta$} (m-1-2)
    (m-2-1) edge node [below]{$\Delta$} (m-3-2)
    (m-1-2) edge node [right]{$\Delta$} (m-3-2)
    (m-3-2) edge node {} (m-1-2)
            ;
\end{tikzpicture}\\
\end{center}
		\end{proposition}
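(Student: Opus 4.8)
The plan is to unfold the definition of $\widehat{\opn{Hilb}}_{A_2}(\mathbb{C}^n)$ into the total space of an explicit $\mathbb{P}^{n-1}$-bundle over $\mathbb{P}^{n-1}$, which gives smoothness, and then to write down the morphism to $\opn{Hilb}_{A_2}(\mathbb{C}^n)$ by hand and check that it is proper, birational and $\opn{GL}_n$-equivariant. To begin, I would make the parametrization concrete. Given a surjective filtered homomorphism $f\colon J_2(n)\xrightarrow[]{\Delta}N$, write the filtration as $N\supset F_2N\supset 0$ and put $L_1=N/F_2N$, $L_2=F_2N$. Filtration compatibility forces $f(J_2(n)^2)\subseteq F_2N$, so $f$ descends to $\alpha\colon\mathbb{C}^n=J_2(n)/J_2(n)^2\to L_1$, which is surjective since $f$ is. After choosing a vector-space splitting $N\cong L_1\oplus L_2$, the datum of $f$ becomes a triple $(\alpha,\beta,\mu)$, where $\beta\colon\mathbb{C}^n\to L_2$ is linear and $\mu\colon\opn{Sym}^2 L_1\to L_2$ is the multiplication of $N$; the value of $f$ on $J_2(n)^2$ is then forced by $f(xy)=\mu(\alpha(x)\alpha(y))$, and conversely every such triple with $\alpha$ surjective yields a filtered surjection onto a $2$-dimensional $(1,1)$-filtered nilpotent algebra exactly when $\mu\ne 0$ or $\beta|_{\ker\alpha}\ne 0$. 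The relation $\sim$ is generated by the rescalings of $L_1$ and $L_2$ and by the unipotent shifts $v\mapsto v+\psi(v)$, $\psi\in\opn{Hom}(L_1,L_2)$.

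Next I would organise this over $\mathbb{P}^{n-1}$. The rule $[f]\mapsto\ker\alpha$ is a well-defined $\opn{GL}_n$-equivariant morphism $q\colon\widehat{\opn{Hilb}}_{A_2}(\mathbb{C}^n)\to\mathbb{P}((\mathbb{C}^n)^{*})=\mathbb{P}^{n-1}$, and I would compute its fibres. Over a fixed hyperplane $H=\ker\alpha$, normalising $L_1=L_2=\mathbb{C}$ and $\alpha$, the residual symmetry is $\mathbb{C}^{*}\ltimes\mathbb{C}$ (the rescaling of $L_2$ together with $\beta\mapsto\beta+t\alpha$), and the fibre of $q$ is the quotient of $\{(\mu,\beta)\in\mathbb{C}\oplus(\mathbb{C}^n)^{*}\mid\mu\ne 0\text{ or }\beta\notin\mathbb{C}\alpha\}$ by this group. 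Dividing first by the free $\mathbb{C}$-action replaces $\beta$ by its class $\bar\beta\in(\mathbb{C}^n)^{*}/\mathbb{C}\alpha\cong\mathbb{C}^{n-1}$ and the removed locus by the origin; the residual $\mathbb{C}^{*}$ then scales $(\mu,\bar\beta)$, so the fibre is $\mathbb{P}^{n-1}$. Carrying out the same computation in a family over $\mathbb{P}^{n-1}$, where $L_1$ becomes $\mathcal{O}(1)$ and $(\mathbb{C}^n)^{*}/\mathbb{C}\alpha$ becomes a rank-$(n-1)$ bundle, exhibits $\widehat{\opn{Hilb}}_{A_2}(\mathbb{C}^n)$ as a projective bundle $\mathbb{P}(\mathcal{L}\oplus\mathcal{E})$ over $\mathbb{P}^{n-1}$ for a line bundle $\mathcal{L}$ and a rank-$(n-1)$ bundle $\mathcal{E}$. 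In particular it is smooth, irreducible and projective of dimension $2n-2$.

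For the resolution morphism I would use $[f]\mapsto\ker f$, an ideal of codimension $2$ with $J_2(n)/\ker f\cong N$; this is independent of the chosen representative and $\opn{GL}_n$-equivariant, it sends the dense open locus $\{\mu\ne 0\}$, where $N\cong A_2$, into $\{I\mid J_2(n)/I\cong A_2\}$, and, the source being projective, its image is closed, so it defines a proper surjection $\pi\colon\widehat{\opn{Hilb}}_{A_2}(\mathbb{C}^n)\to\opn{Hilb}_{A_2}(\mathbb{C}^n)$. A preimage of an ideal $I$ with $J_2(n)/I\cong A_2$ consists of a surjection $J_2(n)\to J_2(n)/I$ together with a $(1,1)$-filtration on $J_2(n)/I\cong A_2$, taken up to isomorphism; since $A_2$ admits only the filtration $A_2^2\subset A_2$, this is a single point, and $\pi$ restricts to an isomorphism over the stratum $\{J_2(n)/I\cong A_2\}$, which is dense in $\opn{Hilb}_{A_2}(\mathbb{C}^n)$ by definition. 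Hence $\pi$ is proper, birational and $\opn{GL}_n$-equivariant with smooth source, which together with the previous step exhibits $\widehat{\opn{Hilb}}_{A_2}(\mathbb{C}^n)$ as a smooth equivariant resolution of $\opn{Hilb}_{A_2}(\mathbb{C}^n)$.

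The step I expect to be the main obstacle is the smoothness claim: one has to see that recording the $(1,1)$-filtration on the quotient algebra genuinely desingularises the locus of $\opn{Hilb}_{A_2}(\mathbb{C}^n)$ where the generic quotient $A_2$ degenerates to the square-zero algebra. Concretely one must verify that the locus cut out by the failure of surjectivity is exactly $\{\mu=0,\ \beta\in\mathbb{C}\alpha\}$, so that the fibres of $q$ come out as honest projective spaces rather than projective spaces with a linear subspace deleted or blown up, and that these fibres glue, over a trivialising cover of $\mathbb{P}^{n-1}$, into a genuine projective bundle — equivalently, that the quotient by the filtered-isomorphism groupoid is representable, which after the normalisation above reduces to a free quotient by a solvable group. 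Everything else is routine.
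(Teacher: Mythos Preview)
The paper does not actually prove this proposition: it is stated as a known fact, with the construction attributed to Kazarian, and the subsequent text is devoted to recasting $\widehat{\opn{Hilb}}_{A_2}(\mathbb{C}^n)$ in concrete terms rather than to verifying the resolution properties. That recasting (Propositions~2.3--2.4 and the paragraph that follows them) is, however, exactly your smoothness argument in different packaging: the paper's pair $(\psi_1,\psi_2)$ is your triple $(\alpha,\beta,\mu)$ after choosing a splitting $N\cong L_1\oplus L_2$ (so $\psi_1=\alpha\oplus\beta$ and $\psi_2=\mu$), the map $\psi_1'$ is your $\alpha$, and the identification of the kernel of $\psi_1\oplus\psi_2$ as a codimension-$2$ subspace of $L_1^2\oplus(\mathbb{C}^n)^*$ projecting to a hyperplane is precisely your fibre computation, yielding the same $\mathbb{P}^{n-1}$-bundle over $\mathbb{P}^{n-1}$, which the paper names $\mathbb{P}(Q_1\oplus L_1^2)$.

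Your write-up is in fact more complete than the paper's: you spell out the resolution morphism $[f]\mapsto\ker f$, check it is well defined, and argue properness (projectivity of the source) and birationality (uniqueness of the $(1,1)$-filtration on $A_2$), none of which the paper makes explicit. One small point to tighten: when you say ``the source being projective, its image is closed'', you should say closed in an ambient projective variety (e.g.\ the Grassmannian of codimension-$2$ subspaces of $J_2(n)$) and then conclude that the image equals the closure of the $A_2$-stratum by irreducibility of the source and density of $\{\mu\neq 0\}$; this is implicit in what you wrote but worth one extra clause. Otherwise your outline is correct and matches the paper's description where they overlap.
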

The following vector bundle is a smooth equivariant resolution of the $A_2$-locus:
\begin{center}
\begin{tikzpicture}
  \matrix (m) [matrix of math nodes,row sep=3em,column sep=4em,minimum width=2em]
  {
     \operatorname{Hom}(\mathbb{C}^k,I) & \Theta_{A_2}^{n,k} \\
     \widehat{\opn{Hilb}}_{A_2}(\mathbb{C}^n) \\};
  \path[-stealth]
    (m-1-1) edge node {} (m-1-2)
    (m-1-1) edge node {} (m-2-1)
            ;
\end{tikzpicture}\\
\end{center}\par
	Now we need to find a simpler interpretation of this resolution.\par	
Let $g$ be the inverse of the canonical map $ J_2(n)\rightarrow J_2(n)/ {\left(J_2(n)\right)}^2\cong \mathbb{C}^n$:
$$g \colon \mathbb{C}^n\rightarrow J_2(n)$$
Let us denote its image by $Im(g)=E^*.$ $E^*$ is the linear part of $J_2(n).$\par
Let $A^{\Delta}$ be a $2$-dimensional algebra equipped with the $(1,1)$-filtration and $f\in \widehat{\opn{Hilb}}_{A_2}(\mathbb{C}^n).$
We can define two natural maps
$$\psi_1\colon E\rightarrow A^{\Delta},\ \psi_1=f\big|_{E^*}$$
$$\psi_2\colon \operatorname{Sym}^2 A^{\Delta}\rightarrow A^{\Delta}$$
\begin{proposition}
The linear map $\psi_1\oplus \psi_2\colon E^*\oplus \operatorname{Sym}^2 A^{\Delta}\rightarrow A^{\Delta}$ is surjective.	
\end{proposition}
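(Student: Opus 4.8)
\textit{Proof proposal.}
The plan is to deduce the statement from two elementary facts: that $g$ gives a vector-space decomposition $J_2(n) = E^* \oplus (J_2(n))^2$, and that $f$ is a \emph{surjective} algebra homomorphism, which therefore carries this decomposition of the source onto a spanning decomposition of $A^{\Delta}$.

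First I would record the splitting of the source. By construction $g$ is a section of the canonical projection $\pi\colon J_2(n) \twoheadrightarrow J_2(n)/(J_2(n))^2 \cong \mathbb{C}^n$, so $E^* = \operatorname{Im}(g)$ is mapped isomorphically onto the quotient, whence $J_2(n) = E^* \oplus (J_2(n))^2$ as vector spaces. Applying the surjective linear map $f$ gives $A^{\Delta} = f(J_2(n)) = f(E^*) + f\big((J_2(n))^2\big)$.

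Next I would identify the two summands with $\operatorname{Im}(\psi_1)$ and $\operatorname{Im}(\psi_2)$. Since $\psi_1 = f|_{E^*}$ by definition, $f(E^*) = \operatorname{Im}(\psi_1)$. For the second summand: as $f$ is an algebra homomorphism, $f\big((J_2(n))^2\big)$ is the linear span of the products $f(a)f(b)$ with $a,b \in J_2(n)$, and by surjectivity of $f$ this equals the linear span of all products in $A^{\Delta}$, i.e. $(A^{\Delta})^2$. Because $A^{\Delta}$ is commutative (it is a quotient of the commutative algebra $J_2(n)$), its multiplication factors through a map $\operatorname{Sym}^2 A^{\Delta} \to A^{\Delta}$, which is precisely $\psi_2$; hence $\operatorname{Im}(\psi_2) = (A^{\Delta})^2$. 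Combining the two identifications with the displayed equality yields $A^{\Delta} = \operatorname{Im}(\psi_1) + \operatorname{Im}(\psi_2) = \operatorname{Im}(\psi_1 \oplus \psi_2)$, so $\psi_1 \oplus \psi_2$ is surjective.

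I do not expect a real obstacle here; the only points needing a little care are that the identification $f\big((J_2(n))^2\big) = (A^{\Delta})^2$ genuinely uses the surjectivity of $f$ (not merely that it is an algebra map), and that $\psi_2$ should be read as the multiplication map of $A^{\Delta}$ — this is consistent with $f$ being an algebra homomorphism, which forces $f|_{(J_2(n))^2}$ to equal $\psi_2 \circ \operatorname{Sym}^2(\psi_1)$. The filtration-compatibility of $f$ (the label $\Delta$) plays no role in this particular claim.
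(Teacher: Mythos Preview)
Your argument is correct: the splitting $J_2(n)=E^*\oplus (J_2(n))^2$ together with the surjectivity of the algebra homomorphism $f$ gives $A^{\Delta}=f(E^*)+f((J_2(n))^2)=\operatorname{Im}(\psi_1)+(A^{\Delta})^2=\operatorname{Im}(\psi_1)+\operatorname{Im}(\psi_2)$, exactly as you wrote. The paper states this proposition without proof, so there is no argument to compare against; your proof is the natural one and fills the gap cleanly.
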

\begin{proposition}
	Let $N$ be a $2$-dimensional filtered vector space.\par
	$\widehat{\opn{Hilb}}_{A_2}(\mathbb{C}^n)$ is in one-to-one correspondence with the set of isomorphism classes of pairs $(\psi_1,\psi_2),$ where $\psi_2\colon \operatorname{Sym}^2 N\rightarrow N$ is a map giving $N$ an associative commutative algebra structure and $\psi_1\colon \left(\mathbb{C}^n\right)^*\rightarrow N$ is a linear map such that $\psi_1\oplus \psi_2$ is surjective. Pairs $(\psi_1,\psi_2)$ are taken up to filtered algebra isomorphism. 
\end{proposition}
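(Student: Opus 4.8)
The plan is to trade a surjection out of $J_2(n)$ for the finite data of its restriction to the linear part together with the algebra structure on the target, using the canonical splitting of $J_2(n)$ as a graded algebra, $J_2(n)=E^*\oplus\operatorname{Sym}^2E^*$, where $E^*=Im(g)$ and $\operatorname{Sym}^2E^*=(J_2(n))^2$; its multiplication sends a pair of linear elements to their product in $\operatorname{Sym}^2E^*$ and annihilates every product containing a quadratic factor, such a product being of degree $\ge 3$. Given a representative $f\colon J_2(n)\xrightarrow{\Delta}N$ of a point of $\widehat{\opn{Hilb}}_{A_2}(\mathbb{C}^n)$, I attach to it the pair $(\psi_1,\psi_2)$ with $\psi_1:=f|_{E^*}\colon E^*\to N$ and $\psi_2\colon\operatorname{Sym}^2N\to N$ the multiplication of $N$. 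Because $f$ is an algebra homomorphism, for linear $u,v$ one has $f(uv)=f(u)f(v)=\psi_2\big(\operatorname{Sym}^2\psi_1(uv)\big)$, so on the two graded pieces $f$ coincides with $\psi_1\oplus(\psi_2\circ\operatorname{Sym}^2\psi_1)$; in particular $f$ is determined by the pair. Since $N$ is a quotient of the nilpotent algebra $J_2(n)$, it is a $2$-dimensional nilpotent $(1,1)$-filtered algebra, hence one of the two types recalled above, and $\psi_1\oplus\psi_2$ is surjective by the preceding proposition.

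Conversely, from a pair $(\psi_1,\psi_2)$ as in the statement I set $f:=\psi_1\oplus(\psi_2\circ\operatorname{Sym}^2\psi_1)$ and check that $f$ is a surjective filtered algebra homomorphism. Multiplicativity on products of two linear elements holds by construction, and for any product with a quadratic factor both sides vanish: the source side is $0$ by the structure of $J_2(n)$, and the target side lies in $N\cdot N^2\subseteq N^3=0$, using that $N$ is a $2$-dimensional nilpotent algebra. Compatibility with the filtrations is automatic, since $f(\operatorname{Sym}^2E^*)=\psi_2\big(\operatorname{Sym}^2(\operatorname{im}\psi_1)\big)\subseteq N^2\subseteq F_1^N$, the last inclusion holding for both types of $N$. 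For surjectivity, write $L=\operatorname{im}\psi_1$; the case $L=N$ is trivial and $L=0$ is excluded on both sides, while if $\dim L=1$ then surjectivity of $\psi_1\oplus\psi_2$ forces $N=L\oplus N^2$, and a short argument --- were the products $L\cdot L$ zero, then $N\cdot N$ would be zero, contradicting $\dim N^2=1$ --- gives $L\cdot L=N^2$, whence $\operatorname{im}f=L+N^2=N$. Thus the two assignments are mutually inverse on the level of honest homomorphisms and pairs.

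It remains to match the equivalence relations. A filtered linear isomorphism $\phi\colon N\to N'$ satisfies $\phi\circ f=f'$ if and only if it carries $\psi_1$ to $\psi_1'$ and is an algebra isomorphism carrying $\psi_2$ to $\psi_2'$: restricting $\phi\circ f=f'$ to the linear and quadratic pieces gives both conditions, and the converse is read off directly from $f=\psi_1\oplus(\psi_2\circ\operatorname{Sym}^2\psi_1)$ and the analogous formula for $f'$. Passing to isomorphism classes therefore yields the claimed bijection. I expect the main obstacle to be the careful bookkeeping around the degenerate $2$-dimensional algebra $N$, whose $(1,1)$-filtration is not intrinsic: one must make sure that ``filtered surjection out of $J_2(n)$'' and ``filtered pair $(\psi_1,\psi_2)$ with $\psi_1\oplus\psi_2$ surjective'' encode precisely the same data there, and keep track of the nilpotency facts ($N^3=0$ and $N^2\subseteq F_1^N$) that drive the algebra-homomorphism and surjectivity checks.
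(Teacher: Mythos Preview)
The paper states this proposition without proof, so there is no argument in the text to compare against; your proof supplies what the paper omits and is correct. The strategy you use is the natural one implicit in the paper's setup: exploit the graded splitting $J_2(n)\cong E^*\oplus\operatorname{Sym}^2E^*$ to show that a filtered surjective algebra homomorphism $f$ is determined by its restriction $\psi_1=f|_{E^*}$ together with the multiplication $\psi_2$ on the target, and conversely that $f=\psi_1\oplus(\psi_2\circ\operatorname{Sym}^2\psi_1)$ rebuilds a valid $f$ from such a pair. The case analysis on $\dim(\operatorname{im}\psi_1)$ for surjectivity, and the matching of equivalence relations, are both handled cleanly.

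Your closing caveat is exactly right and worth keeping. The proposition as literally stated does not require $\psi_2$ to make $N$ nilpotent, nor that $N^2\subseteq F_1^N$; yet both facts are used in your converse direction (the first for $f(a)f(b)\in N^3=0$ when one factor is quadratic, the second for filtered compatibility). These hold automatically for the two types of $2$-dimensional $(1,1)$-filtered nilpotent algebras the paper lists just before the proposition, which is clearly the intended scope, but the statement would be false for, say, $N$ with $N^2=N$. So your proof is complete under the tacit hypothesis the paper has in mind, and you have correctly isolated where that hypothesis enters.
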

Let us describe $\widehat{\opn{Hilb}}_{A_2}(\mathbb{C}^n)$ using this correspondence.\par
Suppose $N$ be a $2$-dimensional vector space with a  filtration $N_2 \subset N,$ where $N_2$ is a line in $N$. 
\begin{center}
\begin{tikzpicture}
  \matrix (m) [matrix of math nodes,row sep=3em,column sep=4em,minimum width=2em]
  {
     (\mathbb{C}^n)^* & & N/N_2 \\
     & N & \\};
  \path[-stealth]
    (m-1-1) edge node [above]{$\psi_1^{'}$} (m-1-3) 
    (m-1-1) edge node [below] {$\psi_1$} (m-2-2)
    (m-2-2) edge node {} (m-1-3)
            ;
\end{tikzpicture}\\
\end{center}
The kernel of this map is defined by $Ker (\psi_1^{'})=\{V \subset (\mathbb{C}^n)^*\ |\ \operatorname{dim}V=n-1\}=\mathbb{P}^{n-1}(\mathbb{C}^n)^*\cong \mathbb{P}^{n-1}.$ Let us denote $\mathcal{O}(-1)$ over $\mathbb{P}^{n-1}$ by $L_1$ and the quotient bundle by $Q_1.$\par
The kernel of $\psi_1\oplus \psi_2$ is then a codimension $2$ subspace in $\operatorname{Sym}^2 L_1 \oplus (\mathbb{C}^n)^*\cong L_1^2\oplus (\mathbb{C}^n)^*,$ such that it's projection is of codimension 1 in $(\mathbb{C}^n)^*,$ that is:
\begin{center}
\begin{tikzpicture}
  \matrix (m) [matrix of math nodes,row sep=3em,column sep=4em,minimum width=2em]
  {
     \mathbb{P}^{n-1}(Q_1^*\oplus (L_1^*)^2) & \mathbb{P}(Q_1\oplus L_1^2) \\
     & \mathbb{P}^{n-1} \\};
  \path[-stealth]
    (m-1-1) edge node [above] {$\cong$} (m-1-2) 
    (m-1-2) edge node {} (m-2-2)
            ;
\end{tikzpicture}\\
\end{center}\par
Let us fix a point $a$ in $\mathbb{P}^{n-1}.$ The fiber over this point is $\mathbb{P}((Q_1 \oplus L_1^2)|_a)=\mathbb{P}V_a.$ Let $V$ be an $n$-dimensional complex vector space. We have the following tautological sequence on $\mathbb{P}V_a:$
 \begin{center}
\begin{tikzpicture}
  \matrix (m) [matrix of math nodes,row sep=3em,column sep=4em,minimum width=2em]
  {
     \mathcal{O}(-1)=L_2 & V_a & Q_2 \\
     & \mathbb{P}V_a  & \\};
  \path[-stealth]
    (m-1-1) edge node {} (m-1-2)
    (m-1-2) edge node {} (m-1-3)
    (m-1-2) edge node {} (m-2-2)
            ;
\end{tikzpicture}\\
\end{center}\par
This description allows us to present the smooth equivariant resolution of the $A_2$-locus in the following form:
\begin{center}
\begin{tikzpicture}
  \matrix (m) [matrix of math nodes,row sep=3em,column sep=4em,minimum width=2em]
  {
     \operatorname{Hom}\left(\mathbb{C}^k,{{\operatorname{Sym}^2 \mathbb{C}^n \oplus Q_1}\over{L_2}}\right) & \Theta_{A_2}^{n,k} \\
     \mathbb{P}(Q_1\oplus L_1^2) \\
     \mathbb{P}^{n-1}\\};
  \path[-stealth]
    (m-1-1) edge node {} (m-1-2)
    (m-1-1) edge node {} (m-2-1)
    (m-2-1) edge node {} (m-3-1)
            ;
\end{tikzpicture}\\
\end{center}
\subsection{The Borel-Weil-Bott theorem}
Let $V$ be an $n$-dimensional complex vector space. In this paper we use the Borel-Weil-Bott theorem to compute the cohomology of $\operatorname{Gl}(V)$-equivariant vector bundles on $\mathbb{P}V.$\par
The irreducible representations of $\operatorname{Gl}(V)$ are parametrized by their highest weights -- non-increasing integer partitions $\lambda$ of length $n$ (we allow the entries to be equal to 0): $\lambda_1\geq \lambda_2\geq \lambda_n\geq 0$. We will denote the irreducible representation of $\operatorname{Gl}(V)$ of highest weight $\lambda$ by $\Sigma^{\lambda}V.$\par
Consider the canonical sequence of vector bundles on $\mathbb{P}V:$ 
\begin{center}
\begin{tikzpicture}
  \matrix (m) [matrix of math nodes,row sep=3em,column sep=4em,minimum width=2em]
  {
     \mathcal{O}(-1)=L & V & Q \\
     & \mathbb{P}V  & \\};
  \path[-stealth]
    (m-1-1) edge node {} (m-1-2)
    (m-1-2) edge node {} (m-1-3)
    (m-1-2) edge node {} (m-2-2)
            ;
\end{tikzpicture}\\
\end{center}\par
 We will be interested in computing the cohomology of $\opn{Gl}(V)$-equivariant vector bundles of the form $\Sigma^{\lambda}Q\otimes L^m$ on $\mathbb{P}V$.  Following the argument in \cite{Fon}, a vector bundle of this form may be presented as a pushforward of the corresponding line bundle on the flag variety of $\opn{Gl}(V).$ Thus, we may compute its cohomology using the following interpretation of the Borel-Weil-Bott theorem.\par
\begin{theorem}[The Borel-Weil-Bott theorem, \cite{Fon}]
Consider an irreducible $\opn{Gl}(V)$-equivariant vector bundle $\Sigma^{\lambda}Q\otimes L^m$ on $\mathbb{P}V.$ Denote by $(\lambda,m)$ the concatenation of $\lambda=(\lambda_1,\mydots,\lambda_{n-1})$ and $m,$ and by $\rho=(n,n-1,\dots,1)$ the half-sum of the positive roots of $\operatorname{Gl}(V).$\par
 Consider $(\lambda,m)+\rho=(\lambda_1 + n, \lambda_2 + n-1,\mydots, \lambda_{n-1}+2, m+1).$\par
  If two entries of $(\lambda,m)+\rho$ are equal, then $$H^i(\mathbb{P}V,\Sigma^{\lambda}Q\otimes L^m)=0\text{ for all } i.$$ \par
 If all entries of $(\lambda,m)+\rho$ are distinct, then there exists a unique permutation $\sigma$ such that $\sigma((\lambda,m)+\rho)$ is strictly decreasing, i.e. dominant. The length of this permutation, $l(\sigma),$ is the number of strictly increasing pairs of elements of $(\lambda,m)+\rho$.\par Then
$H^i(\mathbb{P}V,\Sigma^{\lambda}Q\otimes L^m)=$
$\begin{cases} \Sigma^{\sigma((\lambda,m)+\rho)-\rho}V\text{ if }i=l(\sigma)\\
0\text{ otherwise.}
\end{cases}$
\end{theorem}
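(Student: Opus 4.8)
The plan is to reduce the statement for the vector bundle $\Sigma^{\lambda}Q\otimes L^m$ on $\mathbb{P}V$ to the classical Borel--Weil--Bott theorem for \emph{line} bundles on the full flag variety $\opn{Fl}(V)=\opn{Gl}(V)/B$, following \cite{Fon}. Write $\pi\colon\opn{Fl}(V)\to\mathbb{P}V$ for the map that remembers only the first step $L\subset V$ of a complete flag $F_1\subset F_2\subset\mydots\subset F_n=V$; its fibre over a line $L$ is the complete flag variety of the quotient $Q_L=V/L$, so $\opn{Fl}(V)$ is the relative flag bundle $\opn{Fl}(Q)\to\mathbb{P}V$. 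On $\opn{Fl}(V)$ one has the tautological line bundles $\ell_1=\pi^*L$ and $\ell_i=F_i/F_{i-1}$ for $2\le i\le n$, with $\ell_2,\mydots,\ell_n$ restricting on each fibre to the tautological successive quotients of $\opn{Fl}(Q_L)$. Set $\mathcal{L}=\ell_2^{\lambda_1}\otimes\mydots\otimes\ell_n^{\lambda_{n-1}}\otimes\ell_1^{m}$, the $\opn{Gl}(V)$-equivariant line bundle whose weight is $(\lambda,m)$ in the coordinates attached to $(\ell_2,\mydots,\ell_n,\ell_1)$ (up to the sign convention fixed by $L=\mathcal{O}(-1)$).

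The first step is to establish $\pi_*\mathcal{L}\cong\Sigma^{\lambda}Q\otimes L^m$ and $R^{j}\pi_*\mathcal{L}=0$ for $j>0$. Both are fibrewise statements over $\mathbb{P}V$: on $\opn{Fl}(Q_L)$ the line bundle $\ell_2^{\lambda_1}\otimes\mydots\otimes\ell_n^{\lambda_{n-1}}$ carries a \emph{dominant} weight because $\lambda$ is non-increasing, so by Borel--Weil its only nonzero cohomology is $H^0=\Sigma^{\lambda}Q_L$; globalising this over $\mathbb{P}V$ (relative Borel--Weil for the flag bundle $\opn{Fl}(Q)$) and applying the projection formula with the pulled-back line bundle $L^m$ gives the two claims. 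Consequently $R\pi_*\mathcal{L}=\pi_*\mathcal{L}$, and the Leray spectral sequence for $\pi$ collapses to the isomorphisms $H^i(\mathbb{P}V,\Sigma^{\lambda}Q\otimes L^m)\cong H^i(\opn{Fl}(V),\mathcal{L})$ for every $i$.

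It then remains to invoke the classical Borel--Weil--Bott theorem on $\opn{Fl}(V)$ for the line bundle $\mathcal{L}$ of weight $\nu=(\lambda,m)$, with $\rho=(n,n-1,\mydots,1)$ and dot-action $w\cdot\nu=w(\nu+\rho)-\rho$. Here $\nu+\rho=(\lambda_1+n,\mydots,\lambda_{n-1}+2,\,m+1)$, and the walls of the Weyl chamber for $\opn{Gl}(V)$ are exactly the loci where two coordinates coincide. If two entries of $\nu+\rho$ are equal, the simple reflection across the corresponding wall fixes $\nu$ under the dot-action, and the standard vanishing argument (pushforward along the associated $\mathbb{P}^1$-bundle, or Bott/Kempf vanishing) forces $H^{*}(\opn{Fl}(V),\mathcal{L})=0$; transported back through the isomorphisms of the previous step this gives the vanishing clause. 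If all entries of $\nu+\rho$ are distinct, there is a unique $\sigma\in S_n$ with $\sigma(\nu+\rho)$ strictly decreasing (dominant), the classical theorem yields $H^i(\opn{Fl}(V),\mathcal{L})=\Sigma^{\sigma(\nu+\rho)-\rho}V$ for $i=l(\sigma)$ and $0$ otherwise, and $l(\sigma)$ equals the number of inversions of $\sigma$, i.e. the number of pairs $p<q$ with $(\nu+\rho)_p<(\nu+\rho)_q$ — the number of strictly increasing pairs of $(\lambda,m)+\rho$ as in the statement. Combining with the previous step proves the theorem.

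I expect the main obstacle to be bookkeeping of conventions rather than anything deep: pinning down the precise weight of $\mathcal{L}$ (in particular the position and sign of the entry $m$, which is dictated by $L=\mathcal{O}(-1)$ rather than $\mathcal{O}(1)$), and checking that ``dominant $=$ strictly decreasing'' together with the $\rho$-shift is matched consistently between the $\opn{Fl}(V)$ picture and the $\mathbb{P}V$ picture so that $\sigma(\nu+\rho)-\rho$ is genuinely a polynomial highest weight of $\opn{Gl}(V)$ in the non-vanishing case. If one insisted on a fully self-contained account instead of citing the line-bundle case, the genuine work would be the classical Borel--Weil--Bott theorem on $\opn{Fl}(V)$ itself, proved for instance by induction on $l(\sigma)$ using the $SL_2$'s attached to simple roots together with Bott vanishing; but in the present context it is legitimately imported from \cite{Fon}.
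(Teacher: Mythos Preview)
Your proposal is correct and follows precisely the approach the paper indicates: the paper does not give a detailed proof but only remarks, just before the theorem, that ``a vector bundle of this form may be presented as a pushforward of the corresponding line bundle on the flag variety of $\opn{Gl}(V)$'' and then cites \cite{Fon}. Your argument --- realise $\Sigma^{\lambda}Q\otimes L^m$ as $R\pi_*\mathcal{L}$ via relative Borel--Weil on the fibres of $\pi\colon\opn{Fl}(V)\to\mathbb{P}V$, then apply the classical Borel--Weil--Bott theorem on $\opn{Fl}(V)$ --- is exactly this outline made explicit, so there is nothing to add.
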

	\begin{example} \label{example} Let us compute $H^i(\mathbb{P}^3, Q\otimes \operatorname{Sym}^2 Q\otimes L^5).$\par
	First, we need to decompose $Q\otimes \operatorname{Sym}^2 Q$ into the direct sum of irreducible representations. The algorithm is the same as in decomposing the product of two corresponding Schur polynomials into a sum of Schur polynomials, for the details see \cite{Ful} or \cite{Ful2}.\par In the case of $Q\otimes \operatorname{Sym}^2 Q$, we obtain the following:
	$$Q\otimes \operatorname{Sym}^2 Q=\Sigma^{(1,0,0)}Q\otimes \Sigma^{(2,0,0)}Q=\Sigma^{(3,0,0)}Q+\Sigma^{(2,1,0)}Q.$$\par
	To compute the cohomology groups of the initial sheaf, we compute the cohomology groups of both irreducible summands: $$H^i(\mathbb{P}^3, Q\otimes \operatorname{Sym}^2 Q\otimes L^5)=H^i(\mathbb{P}^3, \Sigma^{(3,0,0)}Q\otimes L^5)\oplus H^i(\mathbb{P}^3, \Sigma^{(2,1,0)}Q\otimes L^5).$$\par
	Applying the Borel-Weil-Bott theorem to $\Sigma^{(3,0,0)}Q\otimes L^5,$ we first construct the sequence $(\lambda,m)$: here $\lambda=(3,0,0)$ and $m=5.$ We see that $(\lambda,m)+\rho=(3,0,0,5)+(4,3,2,1)=(7,3,2,6)$ has no repetitions. The unique permutation making $(7,3,2,6)$ decreasing is $\sigma=(2,3,4)$. Since there are two increasing pairs in $(7,3,2,6),$ namely, $\{3,6\}$ and $\{2,6\},$ $l(\sigma)$ -- the length of $\sigma$ --  is $2.$ Finally, $\sigma((\lambda,m)+\rho)-\rho=(7,6,3,2)-(4,3,2,1)=(3,3,1,1),$ so the only non-zero cohomology group is $$H^2(\mathbb{P}^3, \Sigma^{(3,0,0)}Q\otimes L^5)=\Sigma^{(3,3,1,1)}\mathbb{C}^4.$$\par
	The second irreducible summand is $\Sigma^{(2,1,0)}Q\otimes L^5.$ Here we obtain $(\lambda,m)+\rho=(2,1,0,5)+(4,3,2,1)=(6,4,2,6)$ -- there are repetitions, so $$H^i(\mathbb{P}^3,\Sigma^{(2,1,0)}Q\otimes L^5)=0\ \text{for all}\ i.$$\par
	The final answer is $H^i(\mathbb{P}^3, Q\otimes \operatorname{Sym}^2 Q\otimes L^5)=$
	$\begin{cases}
	\Sigma^{(3,3,1,1)}\mathbb{C}^4\ \text{if}\ i=2\\
		0\ \text{if}\ i \neq 2
		\end{cases}.$
		\end{example}
\section{Main results}
In this section we show that $\widetilde{\Theta_{A_2}^{n,n}},$ the normalization of $\Theta_{A_2}^{n,n},$ has rational singularities, and give an example, where $\widetilde{\Theta_{A_2}^{n,k}}$ has singularities worse than rational.\par
Consider the quasi-projective variety $Y$ -- Kazarian's smooth resolution of $\Theta_{A_2}^{n,k}:$
\begin{center}
\begin{tikzpicture}
  \matrix (m) [matrix of math nodes,row sep=3em,column sep=4em,minimum width=2em]
  {
     Y = \operatorname{Hom}\left(\mathbb{C}^k,{{\operatorname{Sym}^2 \mathbb{C}^n \oplus Q_1}\over{L_2}}\right) & \Theta_{A_2}^{n,k} \\
     \mathbb{P}(Q_1\oplus L_1^2) \\
     \mathbb{P}^{n-1}\\};
  \path[-stealth]
    (m-1-1) edge node {} (m-1-2)
    (m-1-1) edge node [right]{$p_1$} (m-2-1)
    (m-2-1) edge node [right]{$p_2$} (m-3-1)
            ;
\end{tikzpicture}\\
\end{center}\par
By definition, $\widetilde{\Theta_{A_2}^{n,k}}$ has rational singularities if $H^i (Y,\mathcal{O}_Y)= 0$ for all $i>0.$ We will compute these cohomology groups step by step, by pushing forward along the tower. \par
Fix a point $a$ in $\mathbb{P}^{n-1},$ the fiber over this point is $p_2^{-1}(a)= \mathbb{P}((Q_1 \oplus L_1^2)|_{a}) \cong \mathbb{P}V_a,$ where $V_a$ is an $n$-dimensional complex vector space. Let us also denote the constant sheaf $(\opn{Sym}^2 \C^n\oplus Q_1)|_a$ on $\mathbb{P}V_a$ by $W.$\par
Since the fiber over a point $b$ in $\mathbb{P}V_a$, $\left(\operatorname{Hom}\left(\mathbb{C}^k,{{\operatorname{Sym}^2 \mathbb{C}^n \oplus Q_1}\over{L_2}}\right)\right){\bigg \rvert}_b$, is affine, we have $H^i (Y,\mathcal{O}_Y)=H^i (\mathbb{P}V_a,(p_1)_* \mathcal{O}_Y).$ Moreover, the $\C^*$-action on the fiber allows us to decompose $(p_1)_* \mathcal{O}_Y$ into homogeneous components:
$$(p_1)_* \mathcal{O}_Y=\mathcal{O}_Y |_{p_1^{-1}(b)}\cong \bigoplus_l \opn{Sym}^l \left( {{W\otimes \mathbb{C}^k}\over{L_2\otimes \mathbb{C}^k}}\right).$$
This decomposition leads to the following identity on the level of cohomology: $$H^i (Y,\mathcal{O}_Y)=H^i (\mathbb{P}V_a,(p_1)_* \mathcal{O}_Y)=\bigoplus_l H^i \left(\mathbb{P}V_a,\opn{Sym}^l \left( {{W\otimes \mathbb{C}^k}\over{L_2\otimes \mathbb{C}^k}}\right)\right).$$\par
Let us compute $H^i \left(\mathbb{P}V_a,\opn{Sym}^l \left( {{W\otimes \mathbb{C}^k}\over{L_2\otimes \mathbb{C}^k}}\right)\right).$ We start with the Koszul resolution \cite{Br} of $\operatorname{Sym}^l\left( {{W\otimes \mathbb{C}^k}\over{L_2\otimes \mathbb{C}^k}}\right)$: 
$$\Lambda^l(L_2\otimes\mathbb{C}^k)\rightarrow \Lambda^{l-1}(L_2\otimes\mathbb{C}^k)\otimes \operatorname{Sym}^1 (W\otimes \mathbb{C}^k)\rightarrow \dots $$ 
$$\dots \rightarrow \Lambda^{l-i}(L_2\otimes \mathbb{C}^k)\otimes \operatorname{Sym}^i(W\otimes \mathbb{C}^k)\rightarrow \dots $$
$$\dots \rightarrow \Lambda^1(L_2)\otimes \operatorname{Sym}^{l-1}(W\otimes\mathbb{C}^k) \rightarrow \operatorname{Sym}^l(W\otimes \mathbb{C}^k)\rightarrow \operatorname{Sym}^l\left( {{W\otimes \mathbb{C}^k}\over{L_2\otimes \mathbb{C}^k}}\right)$$\par
We are interested in the case when $l$ is sufficiently large. Note that since $L_2$ is a line bundle, $\Lambda^i(L_2 \otimes \mathbb{C}^k)$ vanishes for $i>k.$ Using these facts we can rewrite the resolution as follows.\par
\textbf{Resolution 1:}
$$L_2^k\otimes\Lambda^k(\mathbb{C}^k)\rightarrow \mydots \rightarrow L_2^{k-i}\otimes \Lambda^{l-i}(\mathbb{C}^k)\otimes \operatorname{Sym}^i(W\otimes \mathbb{C}^k)\rightarrow \mydots $$ $$\mydots \rightarrow \operatorname{Sym}^l(W\otimes \mathbb{C}^k)\rightarrow \operatorname{Sym}^l\left( {{W\otimes \mathbb{C}^k}\over{L_2\otimes \mathbb{C}^k}}\right)$$
According to the Borel-Weil-Bott theorem, 
\begin{itemize}
	\item $H^{n-1}(\mathbb{P}V_a,\ \mathcal{O}(-m))\cong \operatorname{Sym}^{m-n}V_a\otimes \operatorname{det}V_a$ if $m-n\geq 0$,
\item $H^{n-1}(\mathbb{P}V_a,\ \mathcal{O}(-m))\cong 0$ if $m-n<0$,
\item $H^{i}(\mathbb{P}V_a,\ \mathcal{O}(-m))\cong 0$ if $i\neq n-1.$
\end{itemize}\par
This knowledge allows us to write down the Leray spectral sequence, which is a collection of indexed pages, i.e. tables with arrows pointing in the direction $(n,n-1)$ on the $n$-th page. The Leray spectral sequence allows us to obtain the cohomology groups of $\operatorname{Sym}^l\left( {{W\otimes \mathbb{C}^k}\over{L_2\otimes \mathbb{C}^k}}\right)$ by computing successive approximations. On the first page of the Leray spectral sequence, to each sheaf in the resolution above corresponds a column of its cohomology groups: 
\begin{center}
	\includegraphics[width=\textwidth]{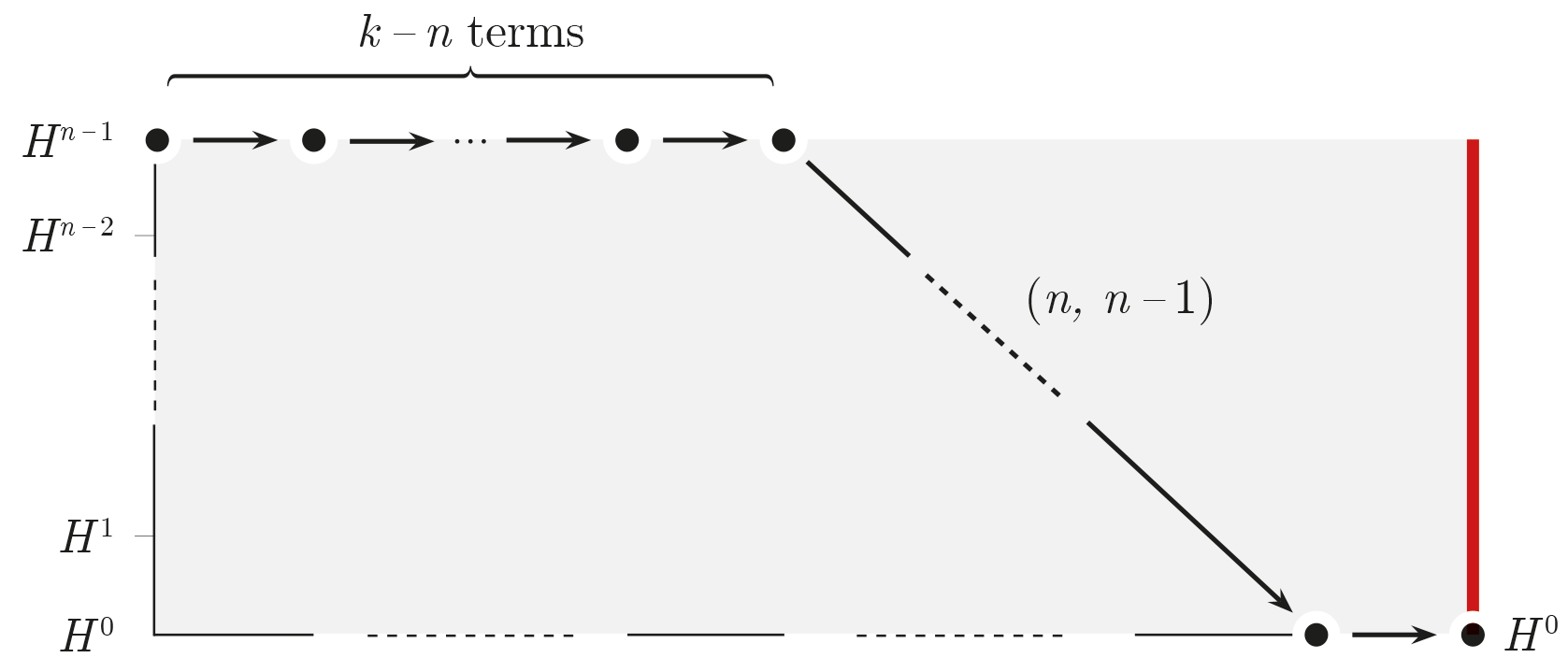}\\
	\end{center}
	\medskip \par
According to Leray's theorem, the spectral sequence for the exact sequence converges to zero. The only term in the first column that can be cancelled by the other terms in the spectral sequence is the term in the $0$-th line. This means that $H^i \left(\mathbb{P}V_a, \operatorname{Sym}^l\left( {{W\otimes \mathbb{C}^k}\over{L_2\otimes \mathbb{C}^k}}\right)\right)$ vanishes for $i>0.$\par 
Applying the pushforward $(p_2)_*,$ we obtain $$H^i (Y,\mathcal{O}_Y)=H^i \left(\mathbb{P}^{n-1}, H^0\left(\mathbb{P}V_a,\operatorname{Sym}^l\left( {{W\otimes \mathbb{C}^k}\over{L_2\otimes \mathbb{C}^k}}\right)\right)\right).$$\par
Let us construct the resolution of $H^0\left(\operatorname{Sym}^l\left( {{W\otimes \mathbb{C}^k}\over{L_2\otimes \mathbb{C}^k}}\right)\right).$ In the spectral sequence above, whatever remains in the line number $n-1$ after the first page goes exactly to $\operatorname{Sym}^l(W\otimes \mathbb{C}^k)$ in the line number $0$ on the $n$-th page. This allows us to write down the following resolution:
$$\operatorname{det}V_a\otimes \operatorname{Sym}^{k-n}V_a\otimes\Lambda^k\mathbb{C}^k\otimes \operatorname{Sym}^{l-k}(W\otimes \mathbb{C}^k)\rightarrow \mydots$$ $$\mydots \rightarrow  \operatorname{det}V_a\otimes \operatorname{Sym}^{k-n-i}V\otimes\Lambda^{k-i}\mathbb{C}^k\otimes \operatorname{Sym}^{l-(k-i)}(W\otimes \mathbb{C}^k) \rightarrow\mydots $$ $$\mydots \rightarrow \operatorname{det}V_a\otimes \Lambda^n\mathbb{C}^k\otimes \operatorname{Sym}^{l-n}(W\otimes \mathbb{C}^k)\rightarrow \operatorname{Sym}^l(W\otimes \mathbb{C}^k) \rightarrow H^0\left(\operatorname{Sym}^l\left( {{W\otimes \mathbb{C}^k}\over{L_2\otimes \mathbb{C}^k}}\right)\right)$$\par
Which can be presented in the following form.\par
\textbf{Resolution 2:}
\begin{equation}
\begin{gathered}
\operatorname{det}Q\otimes L_1^2\otimes \operatorname{Sym}^{k-n}(Q_1\oplus L_1^2)\otimes\Lambda^k\mathbb{C}^k\otimes \operatorname{Sym}^{l-k}(({\operatorname{Sym}^2\mathbb{C}^n\oplus Q_1})\otimes \mathbb{C}^k)\rightarrow \mydots  \nonumber \\ \mydots \rightarrow  \operatorname{det}Q\otimes L_1^2\otimes \operatorname{Sym}^{k-n-i}(Q_1\oplus L_1^2)\otimes\Lambda^{k-i}\mathbb{C}^k\otimes \operatorname{Sym}^{l-(k-i)}(({\operatorname{Sym}^2\mathbb{C}^n\oplus Q_1})\otimes \mathbb{C}^k) \rightarrow \mydots  \\ \mydots \rightarrow \operatorname{det}Q\otimes L_1^2\otimes\Lambda^n\mathbb{C}^k\otimes \operatorname{Sym}^{l-n}(({\operatorname{Sym}^2\mathbb{C}^n\oplus Q_1})\otimes \mathbb{C}^k)\rightarrow  \\ \rightarrow \operatorname{Sym}^l(({\operatorname{Sym}^2\mathbb{C}^n\oplus Q_1})\otimes \mathbb{C}^k) \rightarrow H^0\left(\operatorname{Sym}^l\left( {{({\operatorname{Sym}^2\mathbb{C}^n\oplus Q_1})\otimes \mathbb{C}^k}\over{L_2\otimes \mathbb{C}^k}}\right)\right)
\end{gathered}\label{res1}
\end{equation}
This allows us to formulate our first result.
\begin{theorem}\label{rat}
$\widetilde{\Theta_{A_2}^{n,n}}$ has rational singularities. 	
\end{theorem}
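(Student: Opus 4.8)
The plan is to compute $H^i(Y,\mathcal{O}_Y)$ for every $i>0$ directly, following the reductions already carried out above and specializing them to $k=n$. Recall that we have arrived at
$$H^i(Y,\mathcal{O}_Y)=\bigoplus_l H^i\!\left(\mathbb{P}^{n-1},\ H^0\!\left(\mathbb{P}V_a,\operatorname{Sym}^l\!\left(\frac{W\otimes\mathbb{C}^k}{L_2\otimes\mathbb{C}^k}\right)\right)\right),$$
so it is enough to show, for $k=n$ and for every $l$, that the coherent sheaf $\mathcal{F}_l:=H^0\!\left(\mathbb{P}V_a,\operatorname{Sym}^l\!\left(\frac{W\otimes\mathbb{C}^n}{L_2\otimes\mathbb{C}^n}\right)\right)$ on $\mathbb{P}^{n-1}$ has no cohomology in positive degrees. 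Summing the resulting vanishing over $l$ then gives $H^i(Y,\mathcal{O}_Y)=0$ for $i>0$, which by definition says that $\widetilde{\Theta_{A_2}^{n,n}}$ has rational singularities.

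The key simplification is that when $k=n$ Resolution~2 collapses. Its correction terms all carry the factor $\operatorname{Sym}^{k-n-i}(Q_1\oplus L_1^2)$, which vanishes for every $i\geq 1$ once $k=n$ and equals $\mathbb{C}$ for $i=0$. Hence for $k=n$ Resolution~2 is the short exact sequence
$$0\to \det(Q_1\oplus L_1^2)\otimes\Lambda^n\mathbb{C}^n\otimes\operatorname{Sym}^{l-n}\big((\operatorname{Sym}^2\mathbb{C}^n\oplus Q_1)\otimes\mathbb{C}^n\big)\to \operatorname{Sym}^l\big((\operatorname{Sym}^2\mathbb{C}^n\oplus Q_1)\otimes\mathbb{C}^n\big)\to\mathcal{F}_l\to 0.$$
Passing to the long exact sequence in cohomology on $\mathbb{P}^{n-1}$, it suffices to prove that both left-hand bundles have vanishing cohomology in all positive degrees: then $H^i(\mathbb{P}^{n-1},\mathcal{F}_l)=0$ for $i>0$, using the vanishing of $H^i$ of the middle term and of $H^{i+1}$ of the left term.

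To compute the cohomology of these two bundles I would decompose them into $\operatorname{Gl}(n)$-irreducibles. The bundles $\operatorname{Sym}^2\mathbb{C}^n$ and $\mathbb{C}^n$ are trivial on $\mathbb{P}^{n-1}$, so splitting the symmetric power of the direct sum and applying Cauchy's formula
$$\operatorname{Sym}^b(Q_1\otimes\mathbb{C}^n)=\bigoplus_{|\mu|=b,\ \ell(\mu)\leq n-1}\Sigma^\mu Q_1\otimes\Sigma^\mu\mathbb{C}^n$$
exhibits the middle term as a direct sum of copies of $\Sigma^\mu Q_1$ and the left term as a direct sum of copies of $\det Q_1\otimes L_1^2\otimes\Sigma^\mu Q_1=\Sigma^{\mu+(1^{n-1})}Q_1\otimes L_1^2$, each tensored with a trivial vector space. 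Now I would apply the Borel-Weil-Bott theorem summand by summand. For $\Sigma^\mu Q_1$ the weight $(\mu,0)+\rho=(\mu_1+n,\dots,\mu_{n-1}+2,1)$ has strictly decreasing first $n-1$ entries, the last of which, $\mu_{n-1}+2$, still exceeds the final entry $1$; so the weight is already dominant, $l(\sigma)=0$, and only $H^0$ is nonzero. For $\Sigma^{\mu+(1^{n-1})}Q_1\otimes L_1^2$ the weight $(\mu+(1^{n-1}),2)+\rho=(\mu_1+n+1,\dots,\mu_{n-1}+3,3)$ again has strictly decreasing first $n-1$ entries, all $\geq 3$, and its final entry equals $3$; hence if $\mu_{n-1}=0$ two entries coincide and all cohomology vanishes, while if $\mu_{n-1}\geq 1$ the weight is strictly decreasing, $l(\sigma)=0$, and only $H^0$ survives. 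In every case there is no cohomology in positive degree, which completes the argument.

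The only place where the hypothesis $k=n$ is genuinely used is the collapse of Resolution~2 to a two-term complex; for $k>n$ the surviving correction terms carry the twists $\operatorname{Sym}^{k-n-i}(Q_1\oplus L_1^2)$, and after Cauchy's formula these produce bundles $\Sigma^\nu Q_1\otimes L_1^m$ for which the Borel-Weil-Bott weight $(\nu,m)+\rho$ requires a nontrivial Weyl permutation, so higher cohomology appears — this is exactly what drives the failure of rationality in the other theorem. Accordingly, the only real work in the proof is the Borel-Weil-Bott bookkeeping in the previous paragraph: one must verify that the extra $L_1^2$ twist together with the $\det Q_1$ shift always places the weight of $\Sigma^{\mu+(1^{n-1})}Q_1\otimes L_1^2$ either exactly at a coincidence (acyclic) or strictly above the diagonal (globally generated), uniformly over all partitions $\mu$ occurring in the Cauchy decomposition.
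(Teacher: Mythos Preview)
Your proof is correct and follows essentially the same route as the paper: collapse Resolution~2 to a two-term complex when $k=n$, decompose each term into Schur functors of $Q_1$, and check via Borel--Weil--Bott that neither term has cohomology in positive degree. The only cosmetic differences are that the paper decomposes $\operatorname{Sym}^i(Q_1\otimes\mathbb{C}^n)$ as $\bigoplus_{i_1+\cdots+i_n=i}\operatorname{Sym}^{i_1}Q_1\otimes\cdots\otimes\operatorname{Sym}^{i_n}Q_1$ rather than via the Cauchy formula, and it rewrites $\det Q_1\otimes L_1^2=\det\mathbb{C}^n\otimes L_1$ before applying Borel--Weil--Bott to $\Sigma^{\lambda}Q_1\otimes L_1$ instead of your $\Sigma^{\mu+(1^{n-1})}Q_1\otimes L_1^2$; the resulting weight checks are identical.
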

\begin{proof}
	If $k=n$ then Resolution 2 may be rewritten as follows:
	\begin{equation}
\begin{gathered} 
\operatorname{det}Q\otimes L_1^2\otimes\Lambda^k\mathbb{C}^k\otimes \operatorname{Sym}^{l-k}((\operatorname{Sym}^2\mathbb{C}^k\oplus Q_1)\otimes \mathbb{C}^k)\rightarrow \nonumber \\
\rightarrow \operatorname{Sym}^l((\operatorname{Sym}^2\mathbb{C}^k\oplus Q_1)\otimes \mathbb{C}^k)\rightarrow \nonumber \tag{$\star$} \\
  \rightarrow H^0\left(\operatorname{Sym}^l\left( {{({\operatorname{Sym}^2\mathbb{C}^k\oplus Q_1})\otimes \mathbb{C}^k}\over{L_2\otimes \mathbb{C}^k}}\right)\right) \nonumber
\end{gathered}\label{res}
\end{equation}	
We will prove that, in the corresponding spectral sequence, there are no non-trivial terms above the $0$-th line.
\begin{lemma}
	$$\operatorname{Sym}^N((\operatorname{Sym}^2\mathbb{C}^k\oplus Q_1)\otimes \mathbb{C}^k)=$$ 
$$=\bigoplus_{i=0}^N \left( \left(\operatorname{Sym}^{N-i}\left(\operatorname{Sym}^2\mathbb{C}^k\otimes \mathbb{C}^k\right) \right) \otimes \bigoplus_{(i_1,\dots,i_k)}^{i_1+\dots+i_k=i}\operatorname{Sym}^{i_1}Q_1\otimes \dots \otimes \operatorname{Sym}^{i_k}Q_1\right).$$
\end{lemma}
Setting $N=l,$ the lemma provides the decomposition of $\operatorname{Sym}^l((\operatorname{Sym}^2\mathbb{C}^k\oplus Q_1)\otimes \mathbb{C}^k).$ The only non-constant sheaves here are the sheaves of the form $$\operatorname{Sym}^{i_1}Q_1\otimes \dots \otimes \operatorname{Sym}^{i_k}Q_1.$$
We decompose this tensor product into a sum of irreducible representations:$$\operatorname{Sym}^{i_1}Q_1\otimes \dots \otimes \operatorname{Sym}^{i_m}Q_1=\bigoplus_{\lambda} a_{\lambda} \Sigma^{\lambda}Q_1,$$
where $\lambda=(\lambda_1,\mydots,\lambda_n),\ \sum \lambda_k=\sum i_j,$ and $a_{\lambda}$ are non-negative integers.\par
 Since there is no multiplication by a power of $L_1$ and $\lambda$ is already dominant, i.e. strictly decreasing, by the Borel-Weil-Bott theorem $H^i(\mathbb{P}^{n-1},\operatorname{Sym}^{i_1}Q_1\otimes \dots \otimes \operatorname{Sym}^{i_k}Q_1)=0$ for $i>0$.\par
This proves that the term in the second line of the resolution ~\eqref{res}  does not have any higher cohomology.\par
However, the term in the first line of the resolution ~\eqref{res} has $L_1^2$ as a multiplier. As before, we use the lemma above for $N=l-k$ to find the decomposition of this term. The non-trivial part in this case is the following:
$$\operatorname{det} Q_1\otimes L_1^2\otimes \bigoplus_{\lambda} a_{\lambda}\Sigma^{\lambda}Q_1=\operatorname{det} \mathbb{C}^n\otimes L_1\otimes \bigoplus_{\lambda} a_{\lambda}\Sigma^{\lambda}Q_1.$$
Let us apply the Borel-Weil-Bott theorem to $\Sigma^{\lambda}Q_1\otimes L_1:$  
$$(\lambda_1,\mydots,\lambda_{n-1},1)+(n,\mydots,1)=(\nu_1+n,\mydots,\nu_{n-1}+2,2).$$
Since $\nu_{n-1}\geq 0,$ we either have a dominant sequence if $\nu_{n-1}>0,$ or a repetition if $\nu_{n-1}=0.$ In both cases there is no higher cohomology.\par
So, there are no non-trivial entries in the corresponding Leray spectral sequence above the $0$-th line, so $H^i(Y,\mathcal{O}_Y)=0$ for $i>0,$ and $\widetilde{\Theta_{A_1}^{n,n}}$ has rational singularities.
\end{proof}
\begin{theorem} \label{notrat}
$\widetilde{\Theta_{A_2}^{n,k}}$ in general has singularities worse than rational.	
\end{theorem}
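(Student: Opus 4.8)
The plan is to exhibit a concrete pair $(n,k)$ for which the machinery developed above produces a nonzero higher cohomology group. Concretely, I would use Resolution~2 together with the Leray spectral sequence for the tower $Y\to\mathbb{P}(Q_1\oplus L_1^2)\to\mathbb{P}^{n-1}$, and look for a choice of parameters where one of the sheaves appearing in Resolution~2 contributes a term on a strictly positive line of the spectral sequence that cannot be cancelled. The essential difference from the $k=n$ case is that when $k>n$, Resolution~2 has more terms (the factors $\operatorname{Sym}^{k-n-i}(Q_1\oplus L_1^2)$ and the repeated copies of $\operatorname{det}Q\otimes L_1^2$), and these introduce powers of $L_1$ multiplying Schur functors $\Sigma^\lambda Q_1$ on $\mathbb{P}^{n-1}$; by the Borel--Weil--Bott theorem such bundles $\Sigma^\lambda Q_1\otimes L_1^m$ can have cohomology in intermediate degrees, exactly the phenomenon illustrated in Example~\ref{example}.

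First I would pick the smallest interesting case, say $n=2$ (or $n=3$) and $k$ a bit larger than $n$, and write out Resolution~2 explicitly for a well-chosen value of $l$. I would then decompose each term into $\operatorname{Gl}(n)$-irreducibles using the lemma from the proof of Theorem~\ref{rat} (the $\operatorname{Sym}^N$ decomposition) together with the Pieri/Littlewood--Richardson rule, keeping track of which summands are of the form $\Sigma^\lambda Q_1\otimes L_1^{m}$ with $m\geq 1$. For each such summand I would run the Borel--Weil--Bott recipe: form $(\lambda,m)+\rho$, check whether the entries are distinct, and if so compute the length $l(\sigma)$ of the sorting permutation. The goal is to find one summand whose BWB length is, say, $1$, producing a class in $H^1$ of that term of the resolution.

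The next and most delicate step is to verify that this $H^1$-class genuinely survives to $H^i(Y,\mathcal{O}_Y)$ rather than being killed by a differential in the spectral sequence. For this I would either (a) arrange the degree $l$ and the weight bookkeeping so that the offending class sits in a position where, for purely degree/weight reasons, no incoming or outgoing differential can hit it (e.g. it is the unique class of its $T$-weight in the relevant total degree), or (b) compare Euler characteristics: compute $\sum_i(-1)^i\chi[H^i(Y,\mathcal{O}_Y)](t)$ via the alternating sum over Resolution~2 and over the earlier Koszul/Leray steps, and show it differs from $\chi[H^0(\widetilde{\Theta_{A_2}^{n,k}},\mathcal{O})](t)$, which by the Proposition relating rationality to the equivariant Euler characteristic forces a nonvanishing higher cohomology group somewhere. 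Approach (b) is cleaner because it sidesteps tracking individual differentials; approach (a) gives a more explicit witness. I expect to present one explicit triple $(n,k,l)$, exhibit the bad irreducible summand, and conclude via the weight argument.

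The main obstacle is precisely the survival-to-$E_\infty$ issue: a single BWB-length-one class on some term of Resolution~2 does not by itself prove $H^1(Y,\mathcal{O}_Y)\neq 0$, since the long resolution gives many differentials. I anticipate that the cleanest route around this is the Euler-characteristic comparison, but making that comparison effective requires computing $H^0(\widetilde{\Theta_{A_2}^{n,k}},\mathcal{O})$ — i.e. the coordinate ring of the normalization — well enough to detect a discrepancy, which is itself nontrivial; alternatively one fixes the weight grading carefully so that the bad class is isolated. Once the explicit example is in hand, the final sentence is immediate: since a variety with rational singularities satisfies the equivariant Euler characteristic identity of the Proposition (and, by Boutot's theorem, reductive GIT quotients of smooth varieties have rational singularities), the failure of that identity shows $\widetilde{\Theta_{A_2}^{n,k}}$ is not such a quotient.
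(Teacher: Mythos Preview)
Your outline shares the right skeleton with the paper's proof --- pick a concrete $(n,k,l)$, write out Resolution~2, apply Borel--Weil--Bott to each term, and read off a nonvanishing higher cohomology group from the spectral sequence --- but it misses the two decisive choices that make the paper's argument go through, and one of your concrete suggestions would actually fail.

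First, the paper does \emph{not} look for a summand with ``BWB length~$1$'' producing $H^1$ of some term $B_j$ of Resolution~2. In the hypercohomology spectral sequence, a class in $H^q(B_j)$ contributes to $H^{q-j}$ of the target sheaf, so to land in $H^1$ one needs $q=j+1$; since $B_0=\operatorname{Sym}^l((\operatorname{Sym}^2\mathbb{C}^n\oplus Q_1)\otimes\mathbb{C}^k)$ has only $H^0$ (no $L_1$ factor), and the analysis in the proof of Theorem~\ref{rat} shows that multiplying by $\det Q_1\otimes L_1^2$ alone never produces higher cohomology, the first place anything interesting can appear is when enough powers of $L_1^2$ accumulate from $\operatorname{Sym}^{k-n}(Q_1\oplus L_1^2)$. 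The paper takes $(n,k,l)=(5,7,7)$ so that the leftmost term $B_3$ contains the summand $\det Q_1\otimes L_1^6$, which by BWB has $H^4(\mathbb{P}^4,\,\cdot\,)\neq 0$; here $q-j=4-3=1$. Your suggested small cases $n=2$ or $n=3$ do not give enough room: a short BWB check shows that on $\mathbb{P}^2$ every summand of every $B_j$ with $q>j$ has a repetition or is already dominant, so no higher cohomology of the target can arise this way.

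Second, and more importantly, the paper sidesteps the survival problem you correctly flag as ``the main obstacle'' not by weight-isolation or by an Euler-characteristic comparison, but by choosing $l=k$ so that the leftmost term is simply $\det Q_1\otimes L_1^2\otimes\operatorname{Sym}^{k-n}(Q_1\oplus L_1^2)$ and then verifying directly that \emph{all} intermediate terms $B_1,B_2$ have zero cohomology in every degree. The $E_1$ page therefore has exactly two nonzero entries, at $(p,q)=(-3,4)$ and $(0,0)$, and there is nothing for any differential to do; survival is automatic. Your approach~(b) via $\chi[H^0(\widetilde{\Theta_{A_2}^{n,k}},\mathcal{O})]$ would be circular here, since the whole point is that we do not know this coordinate ring independently, and approach~(a) is unnecessary once one has arranged the vanishing of the middle terms.
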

\begin{proof}
	Consider the case $n=5,\ k=7,\ l=7.$ \par
	We prove that $H^1\left( \mathbb{P}^4,\ \operatorname{Sym}^{7} \left( {{\left(\operatorname{Sym}^2\mathbb{C}^5\oplus Q_1\right)\otimes \mathbb{C}^7}\over{L_2\otimes \mathbb{C}^7}}\right)\right)\not\cong 0.$
	In this particular case Resolution 2 is the following:
	$$\opn{det} Q_1\otimes L_1^2\otimes \operatorname{Sym}^2(Q_1\oplus L_1^2)\rightarrow$$ 
	$$\rightarrow \opn{det} Q_1\otimes L_1^2\otimes (Q_1\oplus L_1^2)\otimes \Lambda^6\mathbb{C}^7\otimes((\operatorname{Sym}^2\mathbb{C}^5\oplus Q_1)\otimes \mathbb{C}^7)\rightarrow$$
	$$\rightarrow \opn{det} Q_1\otimes L_1^2\otimes \Lambda^5\mathbb{C}^7\otimes\operatorname{Sym}^2((\operatorname{Sym}^2\mathbb{C}^5\oplus Q_1)\otimes \mathbb{C}^7)\rightarrow$$
	$$\rightarrow \operatorname{Sym}^{7}((\operatorname{Sym}^2\mathbb{C}^5\oplus Q_1)\otimes \mathbb{C}^7)\rightarrow$$
	$$\rightarrow H^0\left( \operatorname{Sym}^{7} \left( {{\left(\operatorname{Sym}^2\mathbb{C}^5\oplus Q_1\right)\otimes \mathbb{C}^7}\over{L_2\otimes \mathbb{C}^7}}\right)\right)$$\par
	Consider the term in the first line of the resolution above.
	$$\opn{det} Q_1\otimes L_1^2\otimes \operatorname{Sym}^2(Q_1\oplus L_1^2)= \opn{det} Q_1\otimes L_1^2\otimes \left( \opn{Sym}^2 Q_1 \oplus Q_1\otimes L_1^2 \oplus L_1^4\right)=$$
	$$=\opn{det}Q_1\otimes L_1^6 \oplus \opn{det}Q_1 \otimes L_1^2 \left(\opn{Sym}^2 Q_1 \oplus Q_1\otimes L_1^2\right).$$
	Using the Borel-Weil-Bott theorem, one can easily check that $$H^4 \left( \mathbb{P}^4,\ \opn{det}Q_1\otimes L_1^6 \right) \not \cong 0,$$ $$H^0 (\mathbb{P}^4,\ \operatorname{Sym}^{7}((\operatorname{Sym}^2\mathbb{C}^5\oplus Q_1)\otimes \mathbb{C}^7))\not \cong 0,$$ but all other terms of the resolution do not have any cohomology.\par
	 The corresponding Leray spectral sequence is the following:\\
	\begin{center}
	\includegraphics[width=\textwidth]{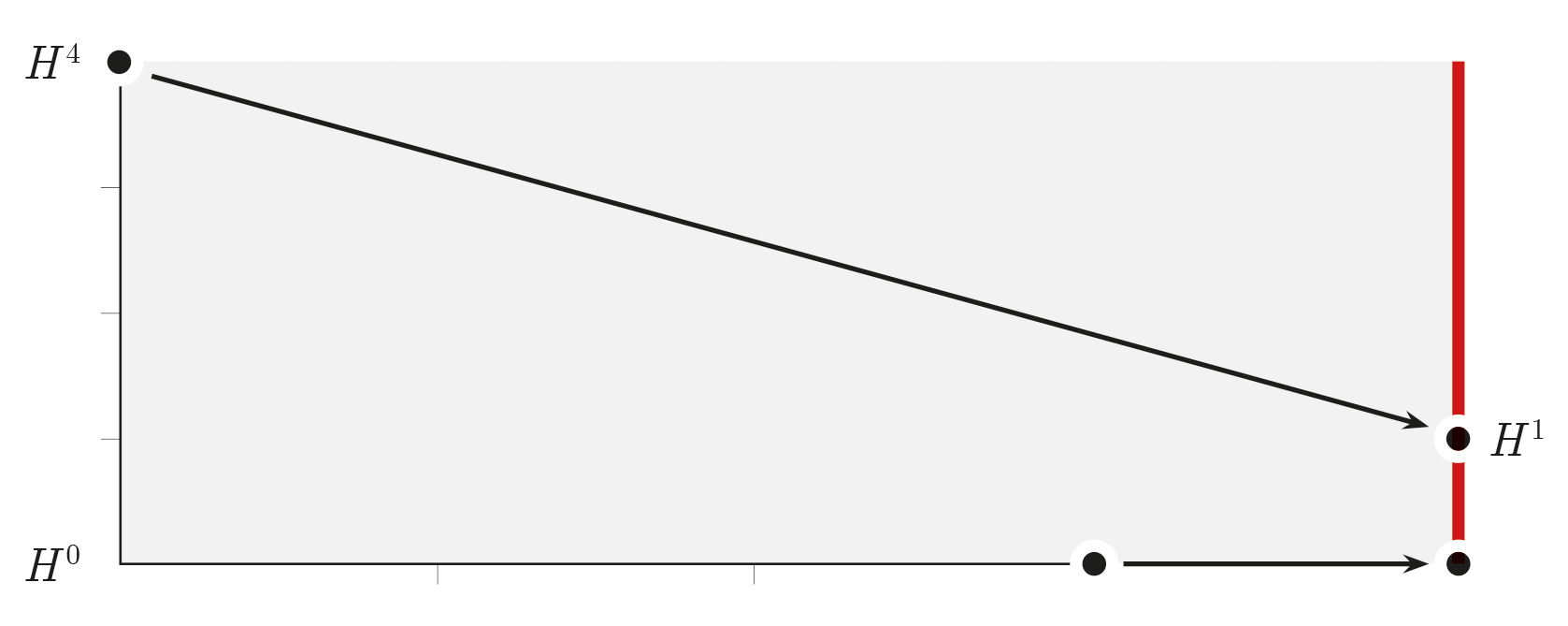}\\
	\end{center}
	\medskip \par
	Thus, we proved that $$H^1\left(\mathbb{P}^4,\ \operatorname{Sym}^{7} \left( {{\left(\operatorname{Sym}^2\mathbb{C}^5\oplus Q_1\right)\otimes \mathbb{C}^7}\over{L_2\otimes \mathbb{C}^7}}\right)\right) \not \cong 0,$$
	and therefore $\widetilde{\Theta_{A_2}^{5,7}}$ has singularities worse than rational.
\end{proof}
According to Boutot \cite{Bou}, the GIT quotient of a smooth variety with respect to a reductive group has rational singularities. Thus, we have the following corollary of the Theorem \ref{notrat}.
\begin{corollary}
$\Theta_{A_2}^{n,k}$ can not be presented as a reductive quotient of a smooth variety.	
\end{corollary}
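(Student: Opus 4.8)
The plan is to deduce this purely formally from Theorem \ref{notrat} together with Boutot's theorem \cite{Bou}, arguing by contradiction. Suppose that $\Theta_{A_2}^{n,k}$ could be presented as a GIT quotient $V /\!\!/ G$, where $V$ is smooth and $G$ is reductive. A smooth variety has rational singularities, so Boutot's theorem \cite{Bou} would force $V /\!\!/ G$ --- and hence $\Theta_{A_2}^{n,k}$ --- to have rational singularities as well. Since rational singularities presuppose normality (this is how the definition is set up above, and is the content of Proposition \ref{pr}), $\Theta_{A_2}^{n,k}$ would then be normal, i.e. $\Theta_{A_2}^{n,k} = \widetilde{\Theta_{A_2}^{n,k}}$, so that $\widetilde{\Theta_{A_2}^{n,k}}$ would have rational singularities.

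I would then contradict this using the explicit pair produced in Theorem \ref{notrat}: for $n = 5$, $k = 7$ the computation there shows $H^1(Y, \mathcal{O}_Y) \neq 0$ for Kazarian's smooth resolution $Y \to \Theta_{A_2}^{5,7}$, and since $H^i(Y, \mathcal{O}_Y)$ is independent of the chosen smooth resolution (the first Proposition of the introduction), $\widetilde{\Theta_{A_2}^{5,7}}$ fails to have rational singularities. Hence no such presentation of $\Theta_{A_2}^{5,7}$ can exist, which is exactly what the corollary asserts --- read, as Theorem \ref{notrat} is, as a statement about the generic behaviour of $\Theta_{A_2}^{n,k}$, for which the single bad pair suffices.

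I do not expect any real obstacle here: the entire mathematical content sits in the spectral sequence computation already carried out for Theorem \ref{notrat}, and what remains is the short logical chain ``smooth $\Rightarrow$ rational quotient (Boutot) $\Rightarrow$ normal $\Rightarrow$ equal to its normalization $\Rightarrow$ rational resolution''. The only place to be slightly careful is to make the normality step explicit, so that Boutot's conclusion about $\Theta_{A_2}^{n,k}$ can be transported to its normalization $\widetilde{\Theta_{A_2}^{n,k}}$, which is the object Theorem \ref{notrat} actually talks about.
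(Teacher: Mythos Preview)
Your proposal is correct and matches the paper's own argument, which is simply the one-line observation preceding the corollary: Boutot's theorem forces a reductive quotient of a smooth variety to have rational singularities, contradicting Theorem~\ref{notrat} for $(n,k)=(5,7)$. Your explicit treatment of the normality step (rational singularities $\Rightarrow$ normal $\Rightarrow$ $\Theta_{A_2}^{n,k}=\widetilde{\Theta_{A_2}^{n,k}}$) is a welcome clarification that the paper leaves implicit.
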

For the recent results on the GIT quotient with respect to non-reductive groups, see the works of Kirwan and B\'erczi \cite{KirB}, and  B\'erczi,  Doran, Hawes and Kirwan \cite{Ber}.
\begin{remark}
In both Theorem \ref{rat} and Theorem \ref{notrat} we consider the normalizations of the $A_2$-loci. Let us show that the normalization is not redundant, i.e. that $\Theta_{A_2}^{n,k}$ is not always normal.\par
Let $V$ be a complex vector space equipped with the action of a compact Lie group $G,$ and let $X$ be a closed $G$-invariant subvariety of $V$. Suppose $Y$ is a smooth $G$-equivariant resolution of $X.$\par
Consider the following diagram:
\begin{center}
\begin{tikzpicture}
  \matrix (m) [matrix of math nodes,row sep=3em,column sep=4em,minimum width=2em]
  {
    &  H^0 (Y,\mathcal{O}_Y)\\
     \hspace{-6em} H^0(V,\mathcal{O}_V)=\bigoplus_l\opn{Sym}^l V^* \\
      & H^0(X,\mathcal{O}_x)\\};
  \path[-stealth]
    (m-2-1) edge node[above] {$f$} (m-1-2)
    (m-2-1) edge node [below]{$g$} (m-3-2)
    (m-3-2) edge node [right ] {$h$} (m-1-2)
            ;
\end{tikzpicture}\\
\end{center}\par
We know that $g$ is always surjective, and, according to Proposition \ref{pr}, $h$ is an isomorphism if and only if $X$ is normal. Now, if $f$ is not surjective, then $h$ can not be an isomorphism, and therefore in this case $X$ is not a normal variety.\par 
Let $V=J_2 (n,k),$ $G=\opn{Gl}(n)\times \opn{Gl}(k),$ $X=\Theta_{A_2}^{n,k},$ and let $Y$ be the Kazarian's smooth equivariant resolution of $\Theta_{A_2}^{n,k}.$\par
Consider Resolution 2 in the general case:
\begin{equation}
\begin{gathered}
\operatorname{det}Q\otimes L_1^2\otimes \operatorname{Sym}^{k-n}(Q_1\oplus L_1^2)\otimes\Lambda^k\mathbb{C}^k\otimes \operatorname{Sym}^{l-k}(({\operatorname{Sym}^2\mathbb{C}^n\oplus Q_1})\otimes \mathbb{C}^k)\rightarrow \mydots  \nonumber \\ \mydots \rightarrow  \operatorname{det}Q\otimes L_1^2\otimes \operatorname{Sym}^{k-n-i}(Q_1\oplus L_1^2)\otimes\Lambda^{k-i}\mathbb{C}^k\otimes \operatorname{Sym}^{l-(k-i)}(({\operatorname{Sym}^2\mathbb{C}^n\oplus Q_1})\otimes \mathbb{C}^k) \rightarrow \mydots  \\ \mydots \rightarrow \operatorname{det}Q\otimes L_1^2\otimes\Lambda^n\mathbb{C}^k\otimes \operatorname{Sym}^{l-n}(({\operatorname{Sym}^2\mathbb{C}^n\oplus Q_1})\otimes \mathbb{C}^k)\rightarrow  \\ \rightarrow \operatorname{Sym}^l(({\operatorname{Sym}^2\mathbb{C}^n\oplus Q_1})\otimes \mathbb{C}^k) \rightarrow H^0\left(\mathbb{P}^{n-1},\ \operatorname{Sym}^l\left( {{({\operatorname{Sym}^2\mathbb{C}^n\oplus Q_1})\otimes \mathbb{C}^k}\over{L_2\otimes \mathbb{C}^k}}\right)\right).
\end{gathered}
\end{equation}\par
Recall that $$H^0(Y,\mathcal{O}_Y)=\bigoplus_l H^0\left(\mathbb{P}^{n-1},\ \operatorname{Sym}^l\left( {{({\operatorname{Sym}^2\mathbb{C}^n\oplus Q_1})\otimes \mathbb{C}^k}\over{L_2\otimes \mathbb{C}^k}}\right)\right) \text{ and }$$ $$H^0(V,\mathcal{O}_V)= \bigoplus_l \operatorname{Sym}^l(({\operatorname{Sym}^2\mathbb{C}^n\oplus \C^n})\otimes \mathbb{C}^k)=\bigoplus_l H^0(\mathbb{P}^{n-1},\operatorname{Sym}^l(({\operatorname{Sym}^2\mathbb{C}^n\oplus Q_1})\otimes \mathbb{C}^k)).$$\par
Since the map $f$ from the diagram above preserves the graded components, it is enough to prove that $$f_l\colon \operatorname{Sym}^l(({\operatorname{Sym}^2\mathbb{C}^n\oplus \C^n})\otimes \mathbb{C}^k) \longrightarrow H^0\left(\mathbb{P}^{n-1},\ \operatorname{Sym}^l\left( {{({\operatorname{Sym}^2\mathbb{C}^n\oplus Q_1})\otimes \mathbb{C}^k}\over{L_2\otimes \mathbb{C}^k}}\right)\right) $$
is not surjective for some fixed $l.$\par
Note that $f_l$ is the right arrow in the line $H^0$ of the first page of the Leray spectral sequence corresponding to Resolution 2. That is, if we can find an example of a spectral sequence with a non-horizontal arrow pointing to the term $H^0\left(\mathbb{P}^{n-1},\ \operatorname{Sym}^l\left( {{({\operatorname{Sym}^2\mathbb{C}^n\oplus Q_1})\otimes \mathbb{C}^k}\over{L_2\otimes \mathbb{C}^k}}\right)\right),$ we prove that $f$ is not surjective. \par
Let $n=3,\ k=4,\ l=4.$ In this case Resolution 2 is the following:
\begin{equation}
\begin{gathered}
 \operatorname{det}Q\otimes L_1^2\otimes (Q_1\oplus L_1^2)\otimes\Lambda^{4}\mathbb{C}^4  \rightarrow  \nonumber \\ 
 \rightarrow \operatorname{det}Q\otimes L_1^2\otimes\Lambda^3\mathbb{C}^4\otimes (({\operatorname{Sym}^2\mathbb{C}^3\oplus Q_1})\otimes \mathbb{C}^4)\rightarrow  \\ \rightarrow \operatorname{Sym}^4(({\operatorname{Sym}^2\mathbb{C}^3\oplus Q_1})\otimes \mathbb{C}^4) \rightarrow
  H^0\left(\mathbb{P}^{2},\ \operatorname{Sym}^4\left( {{({\operatorname{Sym}^2\mathbb{C}^3\oplus Q_1})\otimes \mathbb{C}^4}\over{L_2\otimes \mathbb{C}^4}}\right)\right).
\end{gathered}
\end{equation}\par
A straightforward computation using the Borel-Weil-Bott theorem shows that the corresponding Leray spectral sequence is the following.\par \medskip
\begin{center}
	\includegraphics[width=\textwidth]{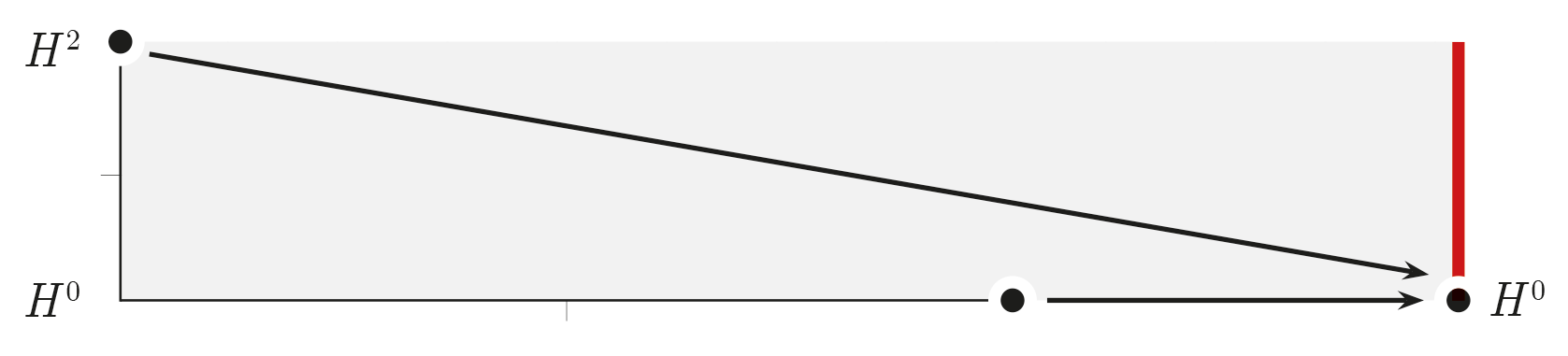}\\
	\end{center}
	\medskip \par
We see that there is a non-horizontal arrow pointing to $H^0\left(\mathbb{P}^{2},\ \operatorname{Sym}^4\left( {{({\operatorname{Sym}^2\mathbb{C}^3\oplus Q_1})\otimes \mathbb{C}^4}\over{L_2\otimes \mathbb{C}^4}}\right)\right),$ thus $\Theta_{A_2}^{3,4}$ is not a normal variety.
\end{remark}
\begin{remark}
Since the equivariant resolutions for the $A_3$-loci given in \cite{BSz} and \cite{Kaz} are smooth, the computational methods presented in this paper may be used to check the rationality of the singularities of $\Theta_{A_3}^{n,k}.$
\end{remark}

 \end{document}